  \def\<{{\langle}} 
  \def\>{{\rangle}}
  \def\note#1{{}}
  \def\note#1{} 
   \def\cK{{\mathcal K}}
  \def\beq{\begin{equation}} 
  \def\eeq{\end{equation}}
 \def\coker{\mathrm{coker}}
  \newcounter{zlist}
  \newcounter{blist}
  \newcounter{rlist}
\def\stac#1{\raise-.2cm\hbox{$\stackrel{\displaystyle\otimes}{\scriptscriptstyle{#1}}$}}
\def\cten#1{\raise-.2cm\hbox{$\stackrel{\displaystyle\widehat{\otimes}}
{\scriptscriptstyle{#1}}$}}
  \def\Label#1{\label{#1}\ifmmode\llap{[#1] }\else 
  \marginpar{\smash{\hbox{\tiny [#1]}}}\fi} 
  \def\Label{\label}
  \newtheorem{proposition}{Proposition}[section]
  \newtheorem{corollary}[proposition]{Corollary} 
  \newtheorem{theorem}[proposition]{Theorem} 
  \theoremstyle{definition}
  \newtheorem{example}[proposition]{Example}
  \theoremstyle{remark} 
  \newtheorem{remark}[proposition]{Remark}
  \newcounter{c} 
  \newcommand{\etyk}[1]{\vspace{-7.4mm}$$\begin{equation}\Label{#1} 
  \addtocounter{c}{1}} 
  \renewcommand{\]}{\ifnum \value{c}=1 $$\else \end{equation}\fi} 
\def\CC{{\mathbb C}}
\def\NN{{\mathbb N}}
\def\PP{{\mathbb P}}
\def\TT{{\mathbb T}}
\def\WW{{\mathbb W}}
\def\ZZ{{\mathbb Z}}
\newcommand{\Cc}{\mathcal{C}}
\def\m{{\sf m}}
\def\*C{{}^*\hspace*{-1pt}{\Cc}}
\def\text#1{{\rm {\rm #1}}}
 \def\y{\mathbf{y}}
 \def\1{\mathbf{1}}
\begin{document}

\title[Quantum lens  and weighted projective spaces]{The $C^*$-Algebras of Quantum Lens \\ and Weighted Projective Spaces}  
 \author{Tomasz Brzezi\'nski}
 \address{ Department of Mathematics, Swansea University, Singleton Park, Swansea SA2 8PP, U.K.  
\& Department of Mathematics, University of Bia\l ystok, K.\ Cio\l kowskiego 1M, 15--245 Bia\l ystok, Poland} 
  \email{T.Brzezinski@swansea.ac.uk}   
\author{Wojciech Szyma\'nski}
\address{Department of Mathematics and Computer Science, University of Southern Denmark, 
Campusvej 55, 5230 Odense M, Denmark} 
\email{szymanski@imada.sdu.dk} 
\keywords{Quantum lens spaces; 
quantum weighted projective spaces;
graph C*-algebras.}

\date{\today}
 
\subjclass[2010]{46L87; 58B32; 58B34} 

\begin{abstract} 
It is shown that the algebra of continuous functions on the  quantum $2n+1$-dimensional lens space $C(L^{2n+1}_q(N; m_0,\ldots, m_n))$ is a graph $C^*$-algebra, for arbitrary positive weights 
$ m_0,\ldots, m_n$. The form of the corresponding graph is determined from the skew product  of the graph which defines the algebra of continuous functions on the quantum sphere $S_q^{2n+1}$ and the cyclic group $\ZZ_N$, with the labelling induced by the weights. Based on this description, the $K$-groups of specific examples are computed. Furthermore, the $K$-groups of the algebras of continuous functions on quantum weighted projective spaces $C(\WW\PP_q^n(m_0,\ldots, m_n))$, interpreted as fixed points under the circle action on $C(S_q^{2n+1})$, are computed under a mild assumption on the weights.
\end{abstract} 

\maketitle 
  
\section{Introduction}

The aim of the present paper is investigation of noncommutative $C^*$-algebras of continuous functions on quantum deformations of two classes of (possibly singular) spaces, namely the lens and weighted projective spaces. Central to these studies is identification of the algebras in question as graph $C^*$-algebras or subalgebras of graph $C^*$-algebras corresponding to certain actions of the circle group. 

In classical geometry both (weighted) lens and weighted projective spaces are obtained as quotients of groups acting (with possible different positive integer weights) on the odd dimensional spheres. In the former case the  acting group is a finite cyclic group, in the latter it is the circle group. Depending on the choice of weights (relatively to the order of the acting group) the action can be free or almost free, thus leading to most easily accessible examples of {\rm orbifolds} in the latter case, \cite{Thu:geo} or \cite{AdeLei:orb}. The fact that these orbifolds are defined by explicit group actions on the spheres suggests an accessible way of defining and studying their noncommutative counterparts through the exploration of analogous actions on quantum odd dimensional spheres,  \cite{VakSob:alg}. On this premise, quantum lens spaces were defined in \cite{HonSzy:len}, and more recently weighted projective spaces were introduced in \cite{BrzFai:tea} with a specific aim of studying quantum or noncommutative orbifolds,  a task subsequently undertaken for example in \cite{Har:orb}, \cite{DAnLan:wei},  \cite{SitVen:geo}, \cite{Har:Dir} and a series of papers by the first author of the present paper. Surprising results of these initial studies include observations that, upon deformation, classically non-free actions become free (see e.g.\ \cite{BrzFai:not}, \cite{AriDAn:Pim} or \cite{DAnLan:wei}) and deformations of non-smooth objects behave as if they were deformations of smooth objects (see e.g.\ \cite{Brz:smo}, \cite{BrzSit:smo}). Despite a significant progress in understanding the structure of weighted projective spaces in special cases (see e.g.\ \cite{DAnLan:wei}, \cite{BrzFai:not}) and deformations of classically singular lens spaces, the full picture is still not complete and does not include some important classes of the objects in question. Guided by the experience of working with quantum lens spaces that correspond to classically non-singular case, \cite{HonSzy:len}, graph $C^*$-algebras appear to offer an effective tool to fill in this gap in our understanding of the structure of the relevant $C^*$-algebras and their 
$K$-theoretic invariants. We are exploiting this opportunity here.
 
A directed graph $G = (G^0,G^1, \varrho,\sigma)$ consists of two sets $G^0$ and $G^1$ (the former the set of vertices and  the latter the set of edges) and two mappings $\varrho, \sigma: G^1\to  G^0$, called the range and source,  respectively. Given a graph $G$ with countably many vertices and edges, $C^*(G)$ denotes the  $C^*$-algebra defined as follows, \cite{FowLac:alg}. $C^*(G)$ is the universal $C^*$-algebra generated by a set  $\{P_v \; |\; v\in G^0\}$ of mutually orthogonal projections and a set $\{S_e \; |\; e\in G^1\}$
of partial isometries   which satisfy the following relations, for all edges $e\neq f \in G^1$ and all vertices $v\in G^0$ emitting a finite number of edges, 
\begin{subequations}\label{graph}
\begin{gather}
S^*_eS_f = 0, \qquad S^*_e S_e = P_{\varrho(e)}, \qquad S_e S^*_e \leq P_{\sigma(e)}, \label{ss}\\
P_v = \sum_{e\in G^1\; :\;  \sigma(e) =v} S_e S^*_e. \label{p}
\end{gather}
\end{subequations}

Graph $C^*$-algebras include important classes of operator algebras, such as the Cuntz-Krieger  algebras \cite{CunKri:cla} or AF algebras. Significant advantage of working with graph $C^*$-algebras stems from the ease 
with which one can calculate their $K$-theory and  primitive ideal spectrum. This feature 
of graph $C^*$-algebras has been widely exploited in their applications  to the classification programme of general $C^*$-algebras (for example, see \cite{Szy:k}, \cite{Sp:s}, \cite{ET:c} or \cite{AE:d}). 
In addition, they have influenced recent developments in purely algebraic ring theory, leading to the introduction of Leavitt path algebras \cite{AbrAra:Lea} in an attempt to explore their classification power beyond operator algebra theory; see \cite{Abr:Lea} for an illuminating  review.   More importantly from the point of view of the subject matter of this text, algebras of continuous functions on quantum spheres and on deformations of non-singular lens spaces can also be interpreted as graph $C^*$-algebras  \cite{HonSzy:sph}, \cite{HonSzy:len}. We extend this interpretation to quantum deformations of all (weighted) lens spaces, including those that are classically singular, and employ it to compute the K-theory of a fairly general class of quantum weighted projective spaces and of particular examples of (weighted, singular) quantum lens spaces.

The paper is organised as follows. In Section~\ref{sec.lens}, we first recall the algebraic definition of quantum lens spaces. The coordinate algebras of quantum lens  spaces are defined as fixed points of  the weighted action of the cyclic group $\ZZ_N$ on (the generators of) the coordinate algebra of the quantum odd-dimensional sphere. We make no assumption on the existence of common factors of weights and the order of the group. Next, using the identification of the algebra of continuous functions on the quantum odd dimensional sphere with the graph $C^*$-algebra associated to a graph $L_{2n+1}$ \cite{HonSzy:sph}, we extend the cyclic group action to the action on this algebra. The resulting fixed point algebra has been shown in \cite{HonSzy:len} to be a graph $C^*$-algebra, provided all weights are coprime with the order of the acting group. We extend this identification to all weights, thus relaxing the coprimeness assumption. Similarly to \cite{HonSzy:len} we construct a suitable graph $L_{2n+1}^{N;\m}$ by relating it to the skew product graph $L_{2n+1}\times_c \ZZ_N$ (where the labelling $c$ is determined by the weights), and using the result of Crisp \cite{Cri:cor} that fixed points of a finite group action on a graph $C^*$-algebra can be identified with specific corner of the algebra associated to the skew product graph. The construction of $L_{2n+1}^{N;\m}$ explores the values of weights modulo the order of the cyclic group and thus heavily depends on them; we illustrate this by a series of examples. The identification of  the algebras of continuous functions on  quantum lens spaces as algebras associated to explicitly described graphs allows one for more effective calculation of their $K$-groups, in particular the $K_0$-groups; we illustrate it be a series of examples too.

In  Section~\ref{sec.proj}, we study algebras of continuous functions on quantum weighted projective spaces $\WW\PP_q^n(m_0,\ldots, m_n)$. On the algebraic level these are defined as fixed points of weighted circle group actions on the quantum odd dimensional sphere, \cite{BrzFai:tea}. On the other hand they can also be identified with a free or principal \cite{BrzHaj:Che} action of the circle group on quantum lens spaces such that all weights divide the order of the cyclic group, \cite{BrzFai:not}. Both actions can be lifted to actions on continuous functions on quantum spheres and lens spaces. The analysis of algebraic structure of quantum weighted projective spaces, in particular of deriving generators and relations, is notoriously difficult, as one has to deal not only with an increasing number of generators but also with relative divisibility properties of the weights.  Until now, even in the lowest dimensional case, $n=1$, the algebraic and operator algebraic structure of $\WW\PP_q^1(m_0, m_1)$ has been understood completely only in the case of coprime weights \cite{BrzFai:tea}. In higher dimensions, the full list of  (algebraic) generators of the coordinate algebra $C(\WW\PP_q^n(m_0,\ldots, m_n))$  is given in \cite{DAnLan:wei}, in the case the weights of the form $m_i = \prod_{j\neq i} l_j$, where $l_0, \ldots , l_n$ are pairwise coprime integers. Furthermore, such a list of generators is given in \cite{BrzFai:not} provided all the weights but the last one are equal to 1. We  prove that the algebra of continuous functions on $\WW\PP_q^1(m_0, m_1)$ is an AF graph $C^*$-algebra, and compute its $K$-theory with no restrictions on the weights (Proposition~\ref{3dimweighted}). It turns out that $C(\WW\PP_q^1(m_0, m_1))$ does not depend on the actual values of the weights, but only on $m_1$ divided by its greatest common divisor with $m_0$, and hence, as a topological noncommutative space any quantum projective line $\WW\PP_q^1(m_0, m_1)$  is isomorphic to the quantum teardrop $\WW\PP_q^1(1, m)$ (with $m=m_1/\gcd(m_0,m_1)$).  Finally we derive a short exact sequence which characterises quantum weighted projective spaces with weights $m_0,\ldots ,m_{n-1}$ coprime with $m_n$, and use it to compute their $K$-theory in the case the weights have the property that for each $j\geq 1$ there is an $i<j$ so that $m_i$ and $m_j$ are relatively prime.

\section{Quantum weighted lens spaces as graph $C^*$-algebras}\label{sec.lens}\setcounter{equation}{0}

In this section we prove that noncommutative algebras of continuous functions on all quantum (weighted) lens spaces are graph $C^*$-algebras.

The algebra of continuous functions on the quantum odd-dimensional sphere $C(S^{2n+1}_q)$ is defined as the universal $C^*$-algebra with generators $z_0,z_1,\ldots ,z_n$, subject to the following relations:
\begin{subequations}\label{sph}
\begin{gather}
z_iz_j = qz_jz_i \quad \mbox{for $i<j$}, \qquad z_iz^*_j = qz_j^*z_i \quad \mbox{for $i\neq j$}, \label{sph1}\\
z_iz_i^* = z_i^*z_i + (q^{-2}-1)\sum_{j=i+1}^n z_jz_j^*, \qquad \sum_{j=0}^n z_jz_j^*=1, \label{sph2}
\end{gather}
\end{subequations}
where $q$ is a real number, $q\in (0,1)$; see \cite{VakSob:alg}. As explained in \cite{HonSzy:sph}, the algebra $C(S^{2n+1}_q)$ can be interpreted as a $C^*$-algebra associated to a graph $L_{2n+1}$ defined as follows. $L_{2n+1}$ has $n+1$ vertices $v_0, v_1, \ldots, v_n$, and $(n+1)(n+2)/2$ edges $e_{ij}$, $i=0,\ldots ,n$, $j = i, \ldots ,n$, with $v_i$ the source and $v_j$ the range of $e_{ij}$. 

Let us fix a sequence of  positive integers $\m := m_0,\ldots, m_n$. For any natural number $N$, $C(S^{2n+1}_q)$ admits the action of the cyclic group $\ZZ_N$ defined by 
\begin{equation} \label{action}
\varrho_\m^N: z_i\mapsto \zeta^{m_i} z_i, 
\end{equation}
where $\zeta$ is a generator of $\ZZ_N$. Under the isomorphism $C(S^{2n+1}_q)\cong 
C^*(L_{2n+1})$, this action takes the form 
\begin{equation} \label{actionongraph}
\varrho_\m^N: S_{e_{ij}}\mapsto \zeta^{m_i} S_{e_{ij}}, \;\;\; 
\varrho_\m^N: P_{v_i} \mapsto P_{v_i}. 
\end{equation}
The fixed points of this action form the algebra of continuous functions on the quantum lens space $C(L^{2n+1}_q(N; \m))$. 
It is shown in \cite{HonSzy:len} that $C(L^{2n+1}_q(N; \m))$ is a graph $C^*$-algebra provided all the $m_i$ are coprime with $N$. We extend this result to the general case with arbitrary weight vector $\m$, below. 

As a matter of fact, $C(L_q^{2n+1}(N; \m))$ is isomorphic to the full corner of the graph $C^*$-algebra associated to the skew product graph $L_{2n+1}\times_c {\ZZ_N}$, where the labelling $c$ is induced from the $\ZZ_N$-action $\varrho^N_\m$, namely $c: e_{ij}\mapsto m_i \! \mod \! N$. More explicitly, the graph $L_{2n+1}\times_c {\ZZ_N}$ has vertices $(v_i, r)$, $i=0,\ldots, n$, $r=0,\ldots , N-1$ and edges $(e_{ij},r)$, $i,j = 0,\ldots n$, $i\leq j$, $r = 0,\ldots N-1$, with $(v_i, r-m_i \!\mod\! N)$ being the source and $(v_j,r)$ being the range of  $(e_{ij},r)$. 
For example, the skew product graph corresponding to $n=1$, $N=6$, $m_0=1$, $m_1=3$, comes out as
\begin{equation}
\begin{tikzpicture}[scale=1.8]

\node (0_0) at (0,0) [circle] {};
\node (1_0) at (1,0) [circle] {};
\node (2_0) at (2,0) [circle] {};
\node (3_0) at (3,0) [circle] {};
\node (4_0) at (4,0) [circle] {};
\node (5_0) at (5,0) [circle] {};
\node (0_1) at (0,2) [circle] {};
\node (1_1) at (1,2) [circle] {};
\node (3_1) at (3,2) [circle] {};
\node (2_1) at (2,2) [circle] {};
\node (4_1) at (4,2) [circle] {};
\node (5_1) at (5,2) [circle] {};

\foreach \x in {0,1,2,3,4,5} \filldraw  (\x_0) circle (1pt);
\foreach \x in {0,1,2,3,4,5} \filldraw  (\x_1) circle (1pt);

\draw (-0.37,0) node {$(v_1,\!0)$};
\draw (0.63,0) node {$(v_1,\!1)$};
\draw (1.63,0) node {$(v_1,\!2)$};
\draw (3.37,0) node {$(v_1,\!3)$};
\draw (4.37,0) node {$(v_1,\!4)$};
\draw (5.37,0) node {$(v_1,\!5)$};
\draw (-0.37,2) node {$(v_0,\!0)$};
\draw (0.63,2) node {$(v_0,\!1)$};
\draw (1.63,2) node {$(v_0,\!2)$};
\draw (2.63,2) node {$(v_0,\!3)$};
\draw (3.63,2) node {$(v_0,\!4)$};
\draw (5.37,2) node {$(v_0,\!5)$};

\draw[-stealth,thick] (0_1)-- (1_0) node[pos=0.5, inner sep=0.5pt, anchor=west] {};
\draw[-stealth,thick] (1_1)-- (2_0) node[pos=0.5, inner sep=0.5pt, anchor=west] {};
\draw[-stealth,thick] (2_1)-- (3_0) node[pos=0.6, inner sep=0.5pt, anchor=south] {};
\draw[-stealth,thick] (3_1)-- (4_0) node[pos=0.5, inner sep=0.5pt, anchor=west] {};
\draw[-stealth,thick] (4_1)-- (5_0) node[pos=0.5, inner sep=0.5pt, anchor=west] {};
\draw[-stealth,thick] (5_1)-- (0_0) node[pos=0.6, inner sep=0.5pt, anchor=south] {};

\draw[-stealth,thick] (0,0) .. controls (1,-.65) and (2,-.65) ..  (3_0);

\draw[-stealth,thick] (1,0) .. controls (2,-.65) and (3,-.65) .. (4_0);

\draw[-stealth,thick] (2,0) .. controls (3,-.65) and (4,-.65) .. (5_0);

\draw[-stealth,thick] (3,0) .. controls (2,.65) and (1,.65) .. (0_0);

\draw[-stealth,thick] (4,0) .. controls (3,.65) and (2,.65) .. (1_0);

\draw[-stealth,thick] (5,0) .. controls (4,.65) and (3,.65) .. (2_0);

\draw[-stealth,thick] (0,2) .. controls (0.15,2.5) and (.75,2.5) .. (1_1);

\draw[-stealth,thick] (1,2) .. controls (1.15,2.5) and (1.75,2.5) .. (2_1);

\draw[-stealth,thick] (2,2) .. controls (2.15,2.5) and (2.75,2.5) .. (3_1);

\draw[-stealth,thick] (3,2) .. controls (3.15,2.5) and (3.75,2.5) .. (4_1);

\draw[-stealth,thick] (4,2) .. controls (4.15,2.5) and (4.75,2.5) .. (5_1);

\draw[-stealth,thick] (5,2) .. controls (3,1.3) and (2,1.3) .. (0_1);
\end{tikzpicture}
\end{equation}

Based on the skew product graph $L_{2n+1}\times_c {\ZZ_N}$ we construct a graph $L_{2n+1}^{N;\m}$ in the following way. $L_{2n+1}^{N;\m}$ has vertices $v_i^r$, $i=0,\ldots ,n$, $r=0,\ldots ,\gcd(N,m_i) -1$, and edges $e^{rs}_{ij;a}$ with the source $v_i^r$ and the range $v_j^s$, and labelled additionally by $a = 1,\ldots , n_{ij}^{rs}$, where $n_{ij}^{rs}$ is a number of paths in $L_{2n+1}\times_c {\ZZ_N}$  from $(v_i,r)$ to $(v_j,s)$ that do not pass through $(v_k,t)$, with $k=i+1, \ldots , j-1$ and $t= 0, \ldots , \gcd(m_k, N) -1$; such paths are termed {\em admissible}. Since there are no edges $(e_{ij}, r)$ in $L_{2n+1}\times_c {\ZZ_N}$ with $i>j$, one may assume that $i\leq j$ in $e^{rs}_{ij;a}$. We refer to the index $i$ as labelling the {\em levels}, and to index $r$ as labelling the {\em loops} in $L_{2n+1}^{N;\m}$. 

It is helpful to analyse the graphs $L_{2n+1}\times_c {\ZZ_N}$ and $L_{2n+1}^{N;\m}$ more closely. Define
\begin{equation}\label{cd}
c_i = \gcd(N,m_i), \qquad d_i = \frac{N}{c_i},
\end{equation}
and observe that $d_i$ is coprime with $m_i/c_i$. The admissible paths from $(v_i, r)$ to $(v_j,s)$ with $r\in \{0, 1,\ldots ,c_i -1\}$, $s\in\{0, 1,\ldots ,c_j -1\}$ have the form
\begin{equation}\label{admissible}
(e_{ii_1}, r+m_i)(e_{i_1i_2}, r_1)\ldots (e_{{i_k}i_{k+1}}, r_k)(e_{i_{k+1}j},s),
\end{equation}
where 
\begin{equation}\label{cond}
r_t = r+m_i +\sum_{a=1}^tm_{i_a}\geq c_{i_t}, \quad t = 1,\ldots , k, \qquad r_k +m_{i_{k+1}}=s 
\end{equation}
(all sums are computed modulo $N$), and no vertex appears twice as the range (or, equivalently, source) of any of the edges that compose into the path \eqref{admissible}. There are no admissible paths between $(v_i,r)$ and $(v_i,s)$ if $r \neq s$. Indeed if there were such a path, then there would exist integers $a,b$ such that 
\begin{equation}\label{coprime}
r - s = am_i -bN.
\end{equation}
The right hand side is divisible by $c_i$, while the left hand side is not, since $|r-s| <c_i$, which gives the desired contradiction. On the other hand, there is exactly one path connecting $(v_i,r)$ with itself:
\begin{equation}\label{loop}
(e_{ii}, r+m_i)(e_{ii},r+2m_i) \ldots (e_{ii}, r+(d_i-1)m_i)(e_{ii}, r).
\end{equation}
To see that \eqref{loop} is admissible, first note that
$$
r + d_i m_i = r + \frac{N}{c_i} m_i = r+  \frac{m_i}{c_i} N \equiv r \mod N,
$$
so that the condition in \eqref{cond} is satisfied.  Furthermore there are no  edges in  \eqref{loop} with the same source. Otherwise, there would need to exist integers $a,b \in {0, \ldots , d_i -1}$ such that $r + am_i = r + bm_i$ modulo $N$. This would imply the existence of $c\in \ZZ$ such that $(a-b)m_i = cN$ or, when both sides are divided by $c_i$,
\begin{equation}\label{contra}
(a-b) \frac{m_i}{c_i} = cd_i.
\end{equation}
Since $|a-b|<d_i$ and $d_i$ is coprime with $m_i/c_i$, the left hand side is not divisible by $d_i$, which gives the required contradiction. By the same token, \eqref{loop} is the shortest path connecting $(v_i,r)$ with itself. Any longer path would need to pass through the same vertex at least twice, hence it would not be admissible. This proves the uniqueness. Thus, If $i=j$, $n_{ii}^{rs} = \delta_{rs}$, i.e.\ there is a single loop attached to each vertex in $L_{2n+1}^{N;\m}$ and there are no links between vertices $v_i^r$ and $v_i^s$ if $r\neq s$. 

If all the $m_i$ are coprime with $N$, the graph $L_{2n+1}^{N;\m}$ coincides with the graph described in \cite[p.\ 257--258]{HonSzy:len}. At the other extreme, i.e.\ if all the $m_i$ divide $N$, then the graph $L_{2n+1}^{N;\m}$ consists of $n+1$ levels of interconnected loops with $m_i$ mutually disconnected loops at the $i$-th level.

\begin{example}
(1) $L_{3}^{kl; 1,l}$ consists of one loop at level 0 and $l$-loops at level 1 with $k$ links connecting the loop in  level 0 with each of the loops at  level 1, so the corresponding graph is:

\begin{equation}
\begin{tikzpicture}[scale=1.8]

\node (0_0) at (0,0) [circle] {};
\node (1_0) at (1,0) [circle] {};
\node (2_0) at (2,0) [circle] {...};
\node (3_0) at (3,0) [circle] {};
\node (0_1) at (0,1) [circle] {};
\node (1_1) at (1,1) [circle] {};
\node (3_1) at (3,1) [circle] {};

\foreach \y in {0,1} \filldraw  (0_\y) circle (1pt);
\foreach \y in {0}  \filldraw  (1_\y) circle (1pt);
\foreach \y in {0}  \filldraw (3_\y) circle (1pt);

\draw (-0.2,1.1) node {$v_0^0$};
\draw (-0.2,0) node {$v_1^0$};
\draw (1.2,0) node {$v_1^1$};
\draw (3.3,-.05) node {$v_1^{l-1}$};

\draw[-stealth,thick] (0_1)-- (0_0) node[pos=0.5, inner sep=0.5pt, anchor=west] {$(k)$};
\draw[-stealth,thick] (0_1)-- (1_0) node[pos=0.5, inner sep=0.5pt, anchor=west] {$\;(k)$};
\draw[-stealth,thick] (0_1)-- (3_0) node[pos=0.6, inner sep=0.5pt, anchor=south] {$\;\;\;(k)$};

\draw[-stealth,thick] (-.75,0) .. controls (-.75,.5) and (-.1,.5) .. (0_0);
\draw[thick] (-.75,0) .. controls (-.75,-.5) and (-.1,-.5) .. (0_0);

\draw[-stealth,thick] (3.75,0) .. controls (3.75,.5) and (3.1,.5) .. (3_0); 
\draw[thick] (3.75,0) .. controls (3.75,-.5) and (3.1,-.5) .. (3_0);

\draw[-stealth,thick] (1.75,0) .. controls (1.75,.5) and (1.1,.5) .. (1_0); 
\draw[thick] (1.75,0) .. controls (1.75,-.5) and (1.1,-.5) .. (1_0);

\draw[-stealth,thick] (-.75,1) .. controls (-.75,1.5) and (-.1,1.5) .. (0_1);
\draw[thick] (-.75,1) .. controls (-.75,.5) and (-.1,.5) .. (0_1);

\end{tikzpicture},
\end{equation}
where the labels in brackets over the straight arrows indicate their multiplicities.

(2) $L_{5}^{kl; 1,1,l}$ consists of one loop at level 0, one loop at level 1 and $l$-loops at level 2 with the numbers of edges connecting different levels given by 
$$
n^{00}_{01} = kl,\qquad  n^{0r}_{02}= \frac{kl(k+1)}{2} - rk, \qquad n^{0r}_{12} = k, 
$$
hence the corresponding graph comes out as:
\begin{equation}
\begin{tikzpicture}[scale=1.8]

\node (0_0) at (0,0) [circle] {};
\node (1_0) at (2,0) [circle] {};
\node (3_0) at (4,0) [circle] {...};
\node (4_0) at (5,0) [circle] {};
\node (0_1) at (0,2) [circle] {};
\node (1_1) at (1,2) [circle] {};
\node (3_1) at (3,2) [circle] {};

\foreach \y in {0,1} \filldraw  (1_\y) circle (1pt);
\foreach \y in {0}  \filldraw  (0_\y) circle (1pt);
\foreach \y in {0}  \filldraw (4_\y) circle (1pt);
\foreach \y in {1} \filldraw  (3_\y) circle (1pt);

\draw (.8,2) node {$v_0^0$};
\draw (3.2,2) node {$v_1^0$};
\draw (-0.2,0) node {$v_2^0$};
\draw (2.2,0) node {$v_2^1$};
\draw (5.3,-.05) node {$v_2^{l-1}$};

\draw[-stealth,thick] (3_1)-- (0_0) node[pos=0.7, inner sep=0.5pt, anchor=south] {$(k)\;$};
\draw[-stealth,thick] (3_1)-- (1_0) node[pos=0.7, inner sep=0.5pt, anchor=west] {$(k)$};
\draw[-stealth,thick] (3_1)-- (4_0) node[pos=0.5, inner sep=0.5pt, anchor=west] {$\;\;(k)$};
\draw[-stealth,thick] (1_1)-- (3_1) node[pos=0.5, inner sep=0.5pt, anchor=south] {$(kl)$};
\draw[-stealth,thick] (1_1)-- (0_0) node[pos=0.5, inner sep=0.5pt, anchor=east] {$\left(\!\frac{kl(k+1)}{2}\!\right)\;\;$}; 
\draw[-stealth,thick] (1_1)-- (1_0) node[pos=0.85, inner sep=0.5pt, anchor=east] {$\left(\!\frac{k(lk+l-2)}{2}\!\right)$}; 
\draw[-stealth,thick] (1_1)-- (4_0) node[pos=0.75, inner sep=0.5pt, anchor=north] {$\left(\!\frac{k(lk-l+2)}{2}\!\right)\;\;\;\;\;\;\;\;\;\;\;$}; 

\draw[-stealth,thick] (-.75,0) .. controls (-.75,.5) and (-.1,.5) .. (0_0);
\draw[thick] (-.75,0) .. controls (-.75,-.5) and (-.1,-.5) .. (0_0);

\draw[-stealth,thick] (5.75,0) .. controls (5.75,.5) and (5.1,.5) .. (4_0); 
\draw[thick] (5.75,0) .. controls (5.75,-.5) and (5.1,-.5) .. (4_0);

\draw[-stealth,thick] (2.75,0) .. controls (2.75,.5) and (2.1,.5) .. (1_0); 
\draw[thick] (2.75,0) .. controls (2.75,-.5) and (2.1,-.5) .. (1_0);

\draw[-stealth,thick] (.25,2) .. controls (.25,2.5) and (.9,2.5) .. (1_1);
\draw[thick] (.25,2) .. controls (.25,1.5) and (.9,1.5) .. (1_1);

\draw[-stealth,thick] (3.75,2) .. controls (3.75,2.5) and (3.1,2.5) .. (3_1);
\draw[thick] (3.75,2) .. controls (3.75,1.5) and (3.1,1.5) .. (3_1);

\end{tikzpicture}
\end{equation}

(3) $L_{5}^{kl; 1,l,l}$ consists of one loop at level 0, $l$ loops at level 1 and $l$ loops at level 2 with the numbers of edges connecting different levels given by 
$$
n^{0r}_{01} = k,\qquad  n^{0r}_{02}= \frac{k(k+1)}{2}, \qquad n^{0r}_{12} = k,
$$
i.e.\
\begin{equation}
\begin{tikzpicture}[scale=1.8]

\node (0_0) at (0,0) [circle] {};
\node (1_0) at (3,0) [circle] {};
\node (2_0) at (4,0) [circle] {...};
\node (3_0) at (6,0) [circle] {};
\node (0_1) at (0,1) [circle] {};
\node (1_1) at (3,1) [circle] {};
\node (2_1) at (4,1) [circle] {...};
\node (3_1) at (6,1) [circle] {};
\node (0_2) at (4,2) [circle] {};

\foreach \y in {0,1} \filldraw  (0_\y) circle (1pt);
\foreach \y in {0,1} \filldraw  (1_\y) circle (1pt);
\foreach \y in {0,1} \filldraw  (3_\y) circle (1pt);
\foreach \y in {2} \filldraw  (0_\y) circle (1pt);

\draw (4,2.2) node {$v_0^0$};
\draw (-.2,1) node {$v_1^0$};
\draw (2.8,1) node {$v_1^1$};
\draw (6.3,1) node {$v_1^{l-1}$};
\draw (-0.2,0) node {$v_2^0$};
\draw (2.8,0) node {$v_2^1$};
\draw (6.3,0) node {$v_2^{l-1}$};

\foreach \x in {0,3} \draw[-stealth,thick] (0_2)-- (\x_1) node[pos=0.7, inner sep=0.5pt, anchor=south] {$\;\;\;(k)$};
\foreach \x in {1} \draw[-stealth,thick] (0_2)-- (\x_1) node[pos=0.7, inner sep=0.5pt, anchor=south] {$(k)\;\;$};
\foreach \x in {1} \draw[-stealth,thick] (0_2)-- (\x_0) node[pos=0.7, inner sep=0.5pt, anchor=west] {$\;\left(\frac{k(k+1)}{2}\right)$};
\foreach \x in {3} \draw[-stealth,thick] (0_2)-- (\x_0) node[pos=0.75, inner sep=0.5pt, anchor=east] {$\left(\frac{k(k+1)}{2}\right)\;\;$};
\foreach \x in {0} \draw[-stealth,thick] (0_2)-- (\x_0) node[pos=0.75, inner sep=0.5pt, anchor=west] {$\;\;\;\;\left(\frac{k(k+1)}{2}\right)$};
\draw[-stealth,thick] (0_1)-- (0_0) node[pos=0.5, inner sep=0.5pt, anchor=west] {$(k)$};
\draw[-stealth,thick] (1_1)-- (1_0) node[pos=0.5, inner sep=0.5pt, anchor=east] {$(k)$};
\draw[-stealth,thick] (3_1)-- (3_0) node[pos=0.5, inner sep=0.5pt, anchor=west] {$(k)$};

\draw[-stealth,thick] (-.75,1) .. controls (-.75,1.5) and (-.1,1.5) .. (0_1);
\draw[thick] (-.75,1) .. controls (-.75,.5) and (-.1,.5) .. (0_1);

\draw[-stealth,thick] (-.75,0) .. controls (-.75,.5) and (-.1,.5) .. (0_0);
\draw[thick] (-.75,0) .. controls (-.75,-.5) and (-.1,-.5) .. (0_0);

\draw[-stealth,thick] (6.75,1) .. controls (6.75,1.5) and (6.1,1.5) .. (3_1); 
\draw[thick] (6.75,1) .. controls (6.75,.5) and (6.1,.5) .. (3_1);

\draw[-stealth,thick] (6.75,0) .. controls (6.75,.5) and (6.1,.5) .. (3_0); 
\draw[thick] (6.75,0) .. controls (6.75,-.5) and (6.1,-.5) .. (3_0);

\draw[-stealth,thick] (2.25,1) .. controls (2.25,1.5) and (2.9,1.5) .. (1_1); 
\draw[thick] (2.25,1) .. controls (2.25,.5) and (2.9,.5) .. (1_1);

\draw[-stealth,thick] (2.25,0) .. controls (2.25,.5) and (2.9,.5) .. (1_0); 
\draw[thick] (2.25,0) .. controls (2.25,-.5) and (2.9,-.5) .. (1_0);

\draw[-stealth,thick] (4,2.75) .. controls (4.5,2.75) and (4.5,2.1) .. (0_2);
\draw[thick] (4,2.75) .. controls (3.5,2.75) and (3.5,2.1) .. (0_2);

\end{tikzpicture}
\end{equation}

(4) $L_{7}^{kl; 1,1,1,l}$ consists of one loop each at levels 0, 1 and 2, and $l$-loops at level 3 with the numbers of edges connecting different levels given by 
$$
n^{00}_{01} = n^{00}_{12} = kl,\qquad  n^{0r}_{13}= \frac{kl(k+1)}{2} - rk, \qquad n^{0r}_{23} = k, \qquad  n^{00}_{02}= \frac{kl(kl+1)}{2},
$$
$$
n^{0r}_{03}= \frac{kl(k+1)}{12} (2kl +l+3) + \frac{kr(r-1)}{2} - \frac{kl(k+1)}{2} r.
$$
\end{example}

The main result of this section is contained in the following
\begin{theorem}\label{thm.lens.main}
As $C^*$-algebras,
$$
C^*(L_{2n+1}^{N;\m}) \cong C(L_q^{2n+1}(N; \m)).
$$
\end{theorem}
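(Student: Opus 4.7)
The strategy is to identify both algebras with the same full corner of $C^*(L_{2n+1}\times_c\ZZ_N)$, and match the two identifications via an explicit $*$-homomorphism built from the admissible paths of the skew product graph. First, Crisp's theorem \cite{Cri:cor} applied to the $\ZZ_N$-action \eqref{actionongraph} on $C^*(L_{2n+1})$ yields an isomorphism $C(L_q^{2n+1}(N;\m)) \cong p\,C^*(L_{2n+1}\times_c\ZZ_N)\,p$, where $p := \sum_{i=0}^n\sum_{r=0}^{c_i-1}P_{(v_i,r)}$ is the sum of the vertex projections indexed by one representative of each level-$i$ loop. Fullness of $p$ is ensured by the analysis preceding the theorem: every vertex $(v_i,s)$ of the skew product graph lies on the unique cyclic path \eqref{loop}, which passes through exactly one representative $(v_i,r)$ with $r<c_i$, so each vertex projection is Murray--von Neumann equivalent in $C^*(L_{2n+1}\times_c\ZZ_N)$ to a subprojection of $p$.

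Next, I would construct a $*$-homomorphism $\Phi : C^*(L_{2n+1}^{N;\m}) \to p\,C^*(L_{2n+1}\times_c\ZZ_N)\,p$ via the universal property of the graph $C^*$-algebra, sending $P_{v_i^r} \mapsto P_{(v_i,r)}$, the unique self-loop generator at $v_i^r$ to the product $S_{(e_{ii},r+m_i)}S_{(e_{ii},r+2m_i)}\cdots S_{(e_{ii},r)}$ indexed by the cyclic path \eqref{loop}, and each inter-level edge $e_{ij;a}^{rs}$ to the product of skew product edge-generators along the $a$-th admissible path \eqref{admissible}. The orthogonality relations \eqref{ss} follow because distinct admissible paths either begin with different initial edges in the skew product (so the corresponding partial isometries are orthogonal directly) or share an initial segment and diverge thereafter (so orthogonality is inherited at the divergence point). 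The summation relation \eqref{p} at $v_i^r$ is then obtained by iteratively applying the Cuntz-Krieger summation at $(v_i,r)$ in the skew product: each term either completes the cyclic path \eqref{loop} (contributing the loop-edge projection of $L_{2n+1}^{N;\m}$) or reaches a non-representative vertex at a strictly higher level, at which one expands further. Termination is guaranteed because inter-level edges of $L_{2n+1}$ strictly raise the level, and the level is bounded by $n$; the terminal paths are by construction admissible, and they match precisely the inter-level edges of $L_{2n+1}^{N;\m}$ leaving $v_i^r$.

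To conclude that $\Phi$ is an isomorphism, surjectivity follows since every element of the corner is a linear combination of words $S_\alpha S_\beta^*$ where $\alpha,\beta$ are paths between representative vertices, and every such path decomposes (modulo loops) into admissible segments. Injectivity follows from a gauge-invariant uniqueness argument: the corner carries a $\TT$-action obtained by restricting a generalized gauge action on $C^*(L_{2n+1}\times_c\ZZ_N)$ in which each edge is weighted so that admissible-path words become homogeneous of degree one; this action is intertwined by $\Phi$ with the canonical gauge action on $C^*(L_{2n+1}^{N;\m})$, and $\Phi(P_{v_i^r}) = P_{(v_i,r)} \neq 0$ for all $i,r$.

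The principal obstacle I anticipate is the combinatorial bookkeeping in the summation relation: one must check that the iterated expansion of \eqref{p} at $(v_i,r)$ produces \emph{exactly} the partition corresponding to the edges of $L_{2n+1}^{N;\m}$ emanating from $v_i^r$, with no over-counting, no omissions, and with correct multiplicities $n_{ij}^{rs}$. This requires carefully tracking admissible paths of varying lengths and ruling out any admissible path that would be absorbed into the cyclic loop at an intermediate representative. The level-strictly-increasing nature of the inter-level edges of $L_{2n+1}$ is essential to both the termination of the expansion and the clean bijection; extracting this consistently, and verifying that the generalized gauge action is well-defined on the corner, is the main technical burden.
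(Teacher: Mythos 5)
Your overall strategy is the paper's: identify $C(L_q^{2n+1}(N;\m))$ with a corner of $C^*(L_{2n+1}\times_c\ZZ_N)$ via \cite{Cri:cor}, send vertices of $L_{2n+1}^{N;\m}$ to vertex projections and edges to admissible-path isometries, check \eqref{ss} directly and \eqref{p} by an iterated expansion, and get surjectivity by decomposing paths into admissible segments. The one genuine departure is injectivity: the paper applies the general Cuntz--Krieger uniqueness theorem \cite{Szy:ck}, checking that the only loops without exits in $L_{2n+1}^{N;\m}$ (the level-$n$ loops) are mapped to partial unitaries with full spectrum, whereas you propose gauge-invariant uniqueness. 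Your route is viable and can be made completely explicit: give an edge of the skew product weight $1$ if its range is a representative vertex $(v_j,s)$ with $s<c_j$, and weight $0$ otherwise. Since only the final edge of an admissible path has range in the representative set, every admissible word is homogeneous of degree one, and the induced circle action fixes all vertex projections, hence restricts to the corner and intertwines the canonical gauge action through your map.

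The genuine gap is your opening step. Crisp's theorem, as invoked in the paper, identifies the fixed-point algebra with the corner determined by $\sum_{i=0}^{n}P_{(v_i,0)}$, i.e.\ only the copies of the vertices at the group identity, not with the corner determined by your larger projection $\sum_{i}\sum_{r<c_i}P_{(v_i,r)}$. The extra projections $P_{(v_i,r)}$ with $0<r<c_i$ are orthogonal to $\sum_iP_{(v_i,0)}$, so these are corners by genuinely different projections, and your fullness remark only yields Morita equivalence, which does not identify them. Nor is the identification a formality: if, say, $N$ divides every $m_i$, then all vertices of $L_{2n+1}\times_c\ZZ_N$ are representative and the skew product graph is a disjoint union of $N$ copies of $L_{2n+1}$, so your corner is all of $C^*(L_{2n+1})^{\oplus N}$, while the $\ZZ_N$-action is trivial and Crisp's corner is a single copy; hence any passage from Crisp's corner to yours must use the weights non-trivially and cannot simply be attributed to \cite{Cri:cor}. (The paper's own map $\psi$ also takes values in the larger corner, so your reading is the one the construction requires, but as a self-contained argument this bridge is missing.) A smaller inaccuracy: in your expansion of \eqref{p}, the claim that each term ``either completes the cyclic path \eqref{loop} or reaches a non-representative vertex at a strictly higher level'' is not right---the expansion typically passes through non-representative vertices at the \emph{same} level---and termination does not follow from the levels being bounded by $n$ alone; it rests on the fact that the level-$i$ cycle has length $d_i$ and meets exactly one representative vertex (equivalently, that admissible paths repeat no vertex), which is how the paper bounds the length of admissible paths and empties the sets $B_\nu$.
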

\begin{proof}
Since $C(L_q^{2n+1}(N;\m))$ is obtained as fixed points of a finite abelian group action on a graph $C^*$-algebra,  \cite[Theorem~4.6]{Cri:cor} implies that it is isomorphic to 
$$
\left( \sum_{i=0}^n P_{(v_i,0)} \right) C^*(L_{2n+1}\times_c \ZZ_N)\left( \sum_{i=0}^n P_{(v_i,0)} \right).
$$
Thus it suffices to prove that the following map
$$
\psi: C^*(L_{2n+1}^{N;\m})  \to \left( \sum_{i=0}^n P_{(v_i,0)} \right)C^*(L_{2n+1}\times_c \ZZ_N)\left( \sum_{i=0}^n P_{(v_i,0)} \right),
$$
given by
$$
P_{v_i^r} \mapsto P_{(v_i,r)}, \qquad i=0,\ldots, n,\quad r=0,1,\ldots, c_i-1,
$$
and, for all admissible paths 
$$
\alpha = {(e_{ii_1}, r+m_i)(e_{i_1i_2}, r_1)\ldots (e_{{i_k}i_{k+1}}, r_k)(e_{i_{k+1}j},s)},
$$
$$
S_\alpha \mapsto S_{(e_{ii_1}, r+m_i)}S_{(e_{i_1i_2}, r_1)}\cdots S_{(e_{{i_k}i_{k+1}}, r_k)}S_{(e_{i_{k+1}j},s)},
$$
extends to a $C^*$-algebra isomorphism.

In view of the universal property of the graph $C^*$-algebra, to prove that $\psi$ extends to a 
$*$-homomorphism it suffices to check that the images of the $P_{v_i^r}$ and $S_\alpha$ under $\psi$ satisfy relations \eqref{graph} for $L_{2n+1}^{N;\m}$. Conditions \eqref{ss} are obvious. To prove \eqref{p}, we fix $(v_i,r)\in L_{2n+1}\times_c \ZZ_N$, and, for any $\nu\in \NN$, split the set of all admissible paths from $(v_i,r)$  into the subsets $A_\nu$ of those of length less than $n$ and $B_\nu$ of those of length exactly $\nu$. We will prove by induction on $\nu$ that
\begin{equation}\label{split}
P_{(v_i,r)} = \sum_{\alpha \in A_\nu} S_\alpha S^*_\alpha + \sum_{\beta \in B_\nu} S_\beta S^*_\beta .
\end{equation}
The equation \eqref{split} obviously holds if $\nu=1$. Now, suppose that it also holds for some (other) $\nu$, and let 
$$
\beta = {(e_{ii_1}, r+m_i)(e_{i_1i_2}, r_1)\ldots (e_{{i_k}i_{k+1}}, r_k)(e_{i_{k+1}j},s)} \in B_\nu.
$$
Using \eqref{p} at the range vertex $(v_j,s)$ of $\beta$ one easily finds that
\begin{equation}\label{s+1}
S_\beta S_\beta^* = \sum_{l= s}^nS_\beta S_{(e_{jl}, s+m_j)}S^*_{(e_{jl}, s+m_j)} S_\beta^*.
\end{equation}
All the paths 
$$
\beta' = {(e_{ii_1}, r+m_i)(e_{i_1i_2}, r_1)\ldots (e_{{i_k}i_{k+1}}, r_k)(e_{i_{k+1}j},s)}(e_{jl}, s+m_j) 
$$
that appear in \eqref{s+1} are admissible, which is obvious in case $l\neq s$. Otherwise, this follows
by the observation that no segment of an admissible path that is not a loop can have length greater than $d_i -1$ at any given level $i$ (otherwise the path would pass through the same vertex at least twice); if an edge with both the source and range at the level $i$ is added its range will be a vertex that is not a range for any edge yet, otherwise one is led to contradiction as in \eqref{contra}.  Therefore, $\beta' \in A_{\nu +1}\setminus A_\nu$ or $\beta'\in B_{\nu+1}$ and, using the inductive hypothesis, we obtain
\begin{eqnarray*}
 P_{(v_i,r)} &=& \sum_{\alpha \in A_\nu} S_\alpha S^*_\alpha + \sum_{\beta \in B_\nu} S_\beta S^*_\beta \\
 &=& \sum_{\alpha \in A_\nu} S_\alpha S^*_\alpha + \sum_{\beta' \in B_{\nu+1}} S_{\beta'} S^*_{\beta'}+ \sum_{\beta' \in A_{\nu +1}\setminus A_\nu} S_{\beta'} S^*_{\beta'} \\
&=& \sum_{\alpha \in A_{\nu+1}} S_\alpha S^*_\alpha + \sum_{\beta \in B_{\nu+1}} S_\beta S^*_\beta .
\end{eqnarray*}
By the principle of mathematical induction, \eqref{split} holds for all natural $\nu$.
 
Since $L_{2n+1}\times_c \ZZ_N$ is a finite graph there exists $\nu$ such that $B_\nu$ is an empty set, and hence relation \eqref{p} for $P_{(v_i,r)} = \psi (P_{v_i^r})$ is equivalent to \eqref{split} for this $\nu$. This proves that $\psi$ extends to a $*$-homomorphism, which we still denote $\psi$. To prove that $\psi$ is injective, we apply 
the general Cuntz-Krieger uniqueness theorem, \cite[Theorem 1.2]{Szy:ck}. Clearly, $\psi(P_{(v_i,r)}) \neq 0$ 
for all vertices $v_i^r$. Also, the only loops without exits in graph $L_{2n+1}^{N;\m}$ are the edges 
$e_n^r:= e_{nn;1}^{rr}$ attached to vertices at level $n$. For all $r=0,\ldots c_n-1$, we have 
$$
\psi(S_{e^r_n}) = (e_{nn}, r+m_n)(e_{nn},r+2m_n) \ldots (e_{nn}, r+(d_n-1)m_n)(e_{nn}, r). 
$$
It follows from the last part of the proof of Theorem 2.4 in \cite{KPR:gralg} that this 
is a partial unitary with full spectrum. Thus the hypothesis of \cite[Theorem 1.2]{Szy:ck} holds and $\psi$ 
is injective. 

Finally, we need to prove that $\psi$ is surjective. First, let us consider a path $\alpha$ with source and range both in the set $\{(v_i, 0) \; |\; i=0,\ldots ,n\}$. Since every loop in the crossed-product graph $L_{2n+1}\times_c {\ZZ_N}$ passes through one of the vertices $(v_i ,r_i)$, $r_i=0,\ldots c_i-1$, $i = 0,\ldots ,n$, the path $\alpha$ is a concatenation of admissible paths, i.e.\ $\alpha = \alpha_1\alpha_2\ldots \alpha_k$, with all the $\alpha_k$ admissible. Therefore $S_\alpha = S_{\alpha_1}S_{\alpha_2}\cdots S_{\alpha_k}$ is in the image of 
$C^*(L_{2n+1}^{N;\m})$ under $\psi$.
\end{proof}

\begin{corollary}\label{cor.ex.seq.lens}
The following sequence of $C^*$-algebras
\begin{equation}\label{ses.lens}
\xymatrix{  0 \ar[r] & (\cK \otimes C(\TT))^{\oplus \gcd(m_n, N)} \ar[r] & C(L_q^{2n+1}(N; \m)) \ar[r] & C(L_q^{2n-1}(N; \m)) \ar[r] & 0,}
\end{equation}
where $\cK$ denotes compact operators on a separable Hilbert space, is exact.
\end{corollary}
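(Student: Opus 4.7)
The plan is to exploit the level structure of $L_{2n+1}^{N;\m}$ to construct the sequence from a saturated hereditary subset $H := \{v_n^r : 0 \leq r \leq c_n -1\}$ of top-level vertices. Hereditarity is immediate: each $v_n^r$ emits only the self-loop $e^r:=e_{nn;1}^{rr}$, whose range stays in $H$. Saturation holds because any $v_i^r$ with $i<n$ emits at least one self-loop at level $i$, whose range lies outside $H$. By the standard structure theory for ideals in graph $C^*$-algebras, the ideal $I_H \subset C^*(L_{2n+1}^{N;\m})$ generated by $\{P_v : v\in H\}$ fits into a short exact sequence
\[
0 \to I_H \to C^*(L_{2n+1}^{N;\m}) \to C^*(L_{2n+1}^{N;\m}/H) \to 0,
\]
where $L_{2n+1}^{N;\m}/H$ is obtained by deleting the vertices of $H$ together with all adjacent edges. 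Via Theorem~\ref{thm.lens.main} the middle term is already $C(L_q^{2n+1}(N;\m))$, so it remains to identify the two ends.

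For the right-hand end, I would verify that $L_{2n+1}^{N;\m}/H$ is exactly $L_{2n-1}^{N;(m_0,\ldots,m_{n-1})}$: the vertices $\{v_i^r : i<n\}$ and the counts $c_i=\gcd(N,m_i)$ for $i<n$ coincide in the two graphs, and the edge multiplicities $n_{ij}^{rs}$ for $i,j<n$ agree, because admissible paths in $L_{2n+1}\times_c \ZZ_N$ from $(v_i,r)$ to $(v_j,s)$ with $j<n$ cannot visit level $n$ (no edge leaves level $n$ except the self-loops). A second application of Theorem~\ref{thm.lens.main}, in dimension $2n-1$, then gives $C^*(L_{2n+1}^{N;\m}/H)\cong C(L_q^{2n-1}(N;\m))$.

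For the left-hand end, I would argue as follows. Since distinct top-level vertices $v_n^r$ and $v_n^s$ are not connected by any edge (we showed $n_{nn}^{rs}=\delta_{rs}$), the ideal splits as $I_H=\bigoplus_{r=0}^{c_n-1} I_{v_n^r}$. In each summand, every path with range $v_n^r$ decomposes uniquely as $\alpha\cdot (e^r)^k$ with $k\geq 0$ and $\alpha$ a path ending at $v_n^r$ that does not use $e^r$; let $X_r$ denote this set of pre-arrival paths. Thanks to the self-loops at lower levels and the presence of at least one edge from level $n-1$ into $v_n^r$ (inherited from the skew product $L_{2n+1}\times_c \ZZ_N$), $X_r$ is countably infinite. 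For distinct $\alpha,\beta\in X_r$, neither is a prefix of the other, because any nontrivial extension of a path in $X_r$ would have to begin with the forbidden edge $e^r$; hence $S_\alpha^* S_\beta =0$. The family $\{S_\alpha S_\beta^*\}_{\alpha,\beta\in X_r}$ therefore constitutes a full system of matrix units generating a copy of $\cK$, and $S_{e^r}$ is a unitary on the corner $P_{v_n^r}$ generating a commuting copy of $C(\TT)$ (off-diagonally, $S_\alpha^* S_{e^r}=0$ for $\alpha\in X_r$ of positive length since the first edge of $\alpha$ is not $e^r$). This produces $I_{v_n^r}\cong \cK\otimes C(\TT)$ and therefore $I_H\cong (\cK\otimes C(\TT))^{\oplus c_n}$, completing the sequence.

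The hardest step is this last identification: rigorously packaging the matrix units and the loop unitary into the untwisted tensor product $\cK\otimes C(\TT)$ (and not into some extension), and confirming that the direct sum decomposition over $r$ is genuine at the level of $C^*$-algebras, rather than merely at the level of generators.
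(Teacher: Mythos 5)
Your proposal is essentially the paper's own proof: the paper also takes the ideal generated by the level-$n$ vertex projections $P_{v_n^r}$, identifies it with $(\cK\otimes C(\TT))^{\oplus\gcd(m_n,N)}$ using the fact that each level-$n$ vertex emits only its loop while receiving infinitely many paths, identifies the quotient with $C^*(L_{2n-1}^{N;(m_0,\ldots,m_{n-1})})$, and concludes via Theorem~\ref{thm.lens.main}, simply citing \cite{KPR:gralg} and \cite{BPRS:rfgralg} for the hereditary/saturated-ideal and loop-without-exit facts that you verify by hand. One small correction to your sketch of the last step: $S_{e^r}$ does \emph{not} commute with the matrix units $S_\alpha S_\beta^*$ (for $\alpha\in X_r$ of positive length and $\beta$ the trivial path one has $S_{e^r}S_\alpha=0$ while $S_\alpha S_{e^r}=S_{\alpha e^r}\neq 0$), so the identification is not via two commuting subalgebras but via the standard isomorphism $S_\alpha S_{e^r}^k S_\beta^*\mapsto e_{\alpha\beta}\otimes z^k$, $k\in\ZZ$, which is precisely the content of the results cited in the paper.
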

\begin{proof}
Each level $n$ vertex in graph $L_{2n+1}^{N;\m}$ emits exactly one edge, to itself. On the other hand, 
there exist infinitely many paths from vertices in other levels to each vertex in level $n$. Thus the closed 
two-sided ideal of $C^*(L_{2n+1}^{N;\m})$ generated by projections $P_{v_n^i}$, $i=0,\ldots, \gcd(m_n, N)-1$, 
is isomorphic to $(\cK \otimes C(\TT))^{\oplus \gcd(m_n, N)}$, \cite{KPR:gralg} and \cite{BPRS:rfgralg}, 
and the corresponding quotient is isomorphic to $C^*(L_{2n-1}^{N;\m})$, 
\cite{BPRS:rfgralg}. Thus the exactness of (\ref{ses.lens}) follows from Theorem \ref{thm.lens.main}. 
\end{proof} 

The identification of the algebra of continuous functions on the quantum lens space with the graph $C^*$-algebra $C^*(L_{2n+1}^{N;\m})$ allows one to design a method for computing $K$-groups of $C(L_q^{2n+1}(N; \m))$. More precisely,  
$$
K_0(C(L^{2n+1}_q(N;\m))) = \coker\, \Phi, \qquad K_1(C(L^{2n+1}_q(N;\m))) = \ker \Phi, 
$$
where $\Phi$ is the endomorphism of a free abelian group with generators $v_i^r$ given by
$$
\Phi(v_i^r) = \sum_{j,s} (n_{ij}^{rs} - \delta_{ij}\delta_{rs})v_j^s;
$$
see \cite[Theorem~3.2]{RaeSzy:Cun}. The complete computation of $K_1(C(L^{2n+1}_q(N;\m)))$ is presented in \cite[Proposition~5.2]{BrzFai:not}, the more difficult computation of $K_0(C(L^{2n+1}_q(N;\m)))$ boils down to detailed analysis of numbers of links connecting various loops 
and then to derive the Smith normal form of the matrix corresponding to the transformation $\Phi$. Recall \cite{Smi:sys} that every integer matrix $A$ can be reduced (by row and column operations) to the diagonal form with entries 0 or $\alpha_1,\ldots, \alpha_n$, where 
\begin{equation}\label{Smith}
\alpha_1 = \Delta_1, \qquad \alpha_{i+1} = \frac{\Delta_{i+1}}{\Delta_i},
\end{equation}
where the $\Delta_i$ are greatest common divisiors of all minors of $A$ of size $i$. The torsion part of the cokernel of the transformation defined by $A$ is then
$$
\ZZ_{\alpha_1}\oplus \ZZ_{\alpha_2}\oplus \ldots \oplus \ZZ_{\alpha_n}.
$$
 Below we give three examples of this.

\begin{example}\label{ex.k5}
$$
K_0(C(L_q^5(kl;1,1,l))) = \ZZ^l \oplus  \begin{cases} \ZZ_k \oplus \ZZ_k & \mbox{if $k$ is odd or $l$ is even},\cr \ZZ_{2k}\oplus \ZZ_{\frac{k}2} & \mbox{if $k$ is even and $l$ is odd}.
\end{cases}
$$
\end{example}
\begin{proof}
The transformation $\Phi$ is determined by the integer $(l+2)\times (l+2)$-matrix with  the first two columns 
$$
\begin{pmatrix}
0 & 0 \cr  kl & 0\cr  kl(k+1)/2 & k \cr kl(k+1)/2 - k & k\cr kl(k+1)/2 - 2k & k\cr \ldots & \ldots \cr kl(k+1)/2 - k(l-1) & k
\end{pmatrix},
$$
and all other entries 0. By simple row and column operations this matrix can be reduced to the block diagonal form
$$
\begin{pmatrix} 
k & 0 & 0\cr kl(k-1)/2 & k & 0 \cr 0 & 0 &0
\end{pmatrix}. 
$$
The right bottom corner gives the infinite part of $K_0(C(L_q^5(kl;1,1,l)))$. The 
greatest common divisor of the minor of size 2 is $\Delta_2 =k^2$, while the 
greatest common divisor of one-dimensional minors  depends on the parity of $k$ and $l$,
$$
\Delta_1= \begin{cases} k  & \mbox{if $k$ is odd or $l$ is even},\cr {\frac{k}2} & \mbox{if $k$ is even and $l$ is odd}
\end{cases}.
$$
In view of \eqref{Smith}, this yields the stated finite part of $K_0(C(L_q^5(kl;1,1,l)))$.
\end{proof}

\begin{example}\label{ex.k5'}
$$
K_0(C(L_q^5(kl;1,l,l))) = \ZZ^l \oplus  \begin{cases} \ZZ_k^{l+1}  & \mbox{if $k$ is odd},\cr \ZZ_{2k}\oplus \ZZ_{\frac{k}2}\oplus \ZZ_k^{l-1} & \mbox{if $k$ is even}.
\end{cases}
$$
\end{example}
\begin{proof}
The transformation $\Phi$ is determined by the integer $(2l+1)\times (2l+1)$-matrix with  the first $l+1$ columns 
$$
\begin{pmatrix}
0 & 0 & 0 &\ldots & 0 \cr 
k & 0 & 0 &\ldots & 0 \cr  
k & 0 & 0 &\ldots & 0 \cr  
 \ldots & \ldots & \ldots &\ldots & 0 \cr 
 k & 0 & 0 &\ldots & 0 \cr   
 k(k+1)/2 & k & 0 &\ldots & 0 \cr
 k(k+1)/2 & 0 & k &\ldots & 0 \cr
  \ldots & \ldots & \ldots &\ldots & 0 \cr
 k(k+1)/2 & 0 & 0 &\ldots & k 
 \end{pmatrix},
$$
and all other entries 0. By subtracting the $l+1$-st row from rows $2$ to $l$, the first $l$ rows can be reduced to the zeros. This gives the infinite part of the group $K_0(C(L_q^5(kl;1,l,l)))$. The greatest common divisors of the minors in the remaining matrix come out as
$$
\Delta_i = \begin{cases} k^i  & \mbox{if $k$ is odd},
\cr {\frac{k^i}2} & \mbox{if $k$ is even}
\end{cases}, \qquad i=1,2,\ldots , l,
$$
and $\Delta_{l+1} = k^{l+1}$. 
In view of \eqref{Smith}, this yields the  finite part of $K_0(C(L_q^5(kl;1,l,l)))$ as stated.
\end{proof}

\begin{example}\label{ex.k7}
Let
\begin{equation}\label{alpha.beta}
\alpha :=n^{00}_{13} = \frac{kl(k+1)}{2}, \qquad \beta := n^{00}_{03} =\alpha\frac{2kl +l+3}{6}.
\end{equation} 
Then
\begin{equation}\label{k7}
K_0(C(L_q^7(kl;1,1,1,l))) = \ZZ^l \oplus  \begin{cases} \ZZ_k \oplus \ZZ_k \oplus \ZZ_k & \mbox{if $k | \alpha$ \& $k | \beta$},\cr
\ZZ_{\frac k6} \oplus \ZZ_k \oplus \ZZ_{6k} & \mbox{if $k |\alpha$ \& $\beta \equiv \frac k6 , \frac{5k}6$ (mod $k$)},\cr
\ZZ_{\frac k3} \oplus \ZZ_k \oplus \ZZ_{3k} & \mbox{if $k | \alpha$ \& $\beta \equiv \frac k3 , \frac{2k}3$ (mod $k$)},\cr
\ZZ_{\frac k2} \oplus \ZZ_k \oplus \ZZ_{2k} & \mbox{if $k | \alpha$ \& $\beta \equiv \frac k2 $ (mod $k$)}, \cr
\ZZ_{\frac k2} \oplus \ZZ_{\frac k2} \oplus \ZZ_{4k} & \mbox{if $k \not{\mid}\  \alpha$ \& $\frac k2  | \beta$},\cr
\ZZ_{\frac k6} \oplus \ZZ_{\frac k2} \oplus \ZZ_{12k} & \mbox{if $k \not |\ \alpha$ \& $\frac k2  \not |\ \beta$}.
\end{cases}
\end{equation}
\end{example}
\begin{proof}
Let us first observe that 
$$
n^{0r}_{13} = \alpha - kr, \qquad n^{0r}_{03} = \beta - \alpha r + \frac{r(r-1)}{2} k.
$$
Therefore, the matrix representing $\Phi$ is an $(l+3) \times (l+3)$-matrix with the non-zero entries contained in the first three columns
$$
\begin{pmatrix}
0 & 0 & 0 \cr
kl & 0 & 0 \cr
\frac{kl(kl+1)}{2} & kl & 0\cr
\beta & \alpha & k \cr
\beta - \alpha & \alpha -k & k\cr
\beta - 2\alpha + k & \alpha -2k & k\cr
\ldots & \ldots & \ldots \cr
\beta - (l-1)\alpha + \frac{(l-1)(l-2)}{2}k & \alpha - (l-1)k & k
\end{pmatrix} .
$$
By elementary row and column operations (starting with subtracting row 4 from all subsequent rows) and using the divisibility of the entries by $k$, we arrive at the block diagonal matrix
$$
\begin{pmatrix}
k & 0 & 0 & 0\cr
\alpha & k & 0 & 0\cr
\beta & \alpha & k & 0\cr
0 & 0 & 0 & 0
\end{pmatrix}.
$$
The zero $l\times l$-matrix in the bottom-right corner gives $\ZZ^l$ as the infinite part of the group $K_0(C(L_q^7(kl;1,1,1,l)))$, while the $3\times 3$-matrix in the top-left corner gives the finite part of $K_0(C(L_q^7(kl;1,1,1,l)))$. Its nature depends on the divisbiliity properties of $\alpha$ and $\beta$ and it splits into two parts. If  $\alpha$ is divisible by $k$, then the matrix can be reduced to 
$$
\begin{pmatrix}
k & 0 & 0 \cr
 0 & k & 0 \cr
\beta & 0 & k 
\end{pmatrix}.
$$
The greatest common divisors of the minors thus read
$$
\Delta_1 = \gcd(k,\beta),\qquad \Delta_2 = \gcd(k^2,k\beta) = k \Delta_1, \qquad \Delta_3 =k^3,
$$
thus yielding the diagonal entries:
$$
\alpha_1 = \gcd(k,\beta), \qquad \alpha_2 = k, \qquad \alpha_3 = \frac{k^2}{\gcd(k,\beta)}.
$$
If $k$ does not divide $\alpha$, then it must be even, and $\alpha$ is divisible by $k/2$, thus leading to the matrix
$$
\begin{pmatrix}
k & 0 & 0 \cr
 \frac{k}{2} & k & 0 \cr
\beta & \frac k2 & k 
\end{pmatrix},
$$
and the corresponding Smith normal form entries
$$
\alpha_1 = \gcd(\frac{k}2,\beta), \qquad \alpha_2 = \frac{k}2, \qquad \alpha_3 = \frac{2k^2}{\gcd(\frac{k}2,\beta)}.
$$
Let us note that
$$
\beta = \frac{k(k+1)(k+2)l(l+1)}{12} +  \frac{(k-1)k(k+1)l(l-1)}{12}.
$$
Hence $\beta$ modulo $k$ has to be a multiple of the sixth of $k$, and, in the case of even $k$, $\beta$ modulo $k/2$ has to be a multiple of the third of $k/2$. The analysis of all these possibilities yields the stated form of $K_0(C(L_q^7(kl;1,1,1,l)))$.
\end{proof}

\begin{remark}\label{rem.comparison}
If $l=1$, the results of Example~\ref{ex.k5} agree with that of \cite[Proposition~2.3]{HonSzy:len}. On the other hand, if $l=1$, then numbers $\alpha$ and $\beta$ defined in \eqref{alpha.beta} come out as 
$$
\alpha = \frac{k(k+1)}{2}, \qquad \beta =\alpha\frac{k+2}{3},
$$
and the second and fourth cases in \eqref{k7} cannot occur. The remaining cases coincide with the $K$-groups computed in \cite[Example~6.6]{AriBra:Gys}.
\end{remark}

\section{$K$-theory of quantum weighted projective  spaces}\label{sec.proj}\setcounter{equation}{0}

The aim of this section is to calculate $K$-theory of a fairly general class of quantum weighted projective spaces and to give a complete description of $C^*$-algebras of continuous functions on quantum weighted projective lines as graph AF-algebras.

As before, we fix a sequence of  positive integers $\m := m_0,\ldots, m_n$. In addition to the $\ZZ_N$-action \eqref{action}, the algebra $C(S^{2n+1}_q)$ admits the circle group action $\varrho_{\m}$, 
\begin{equation}\label{action.circle}
\varrho_{\m}: z_i \mapsto \xi^{m_i} z_i, \qquad i=0,\ldots , n,
\end{equation}
where $\xi$ is the unitary generator of $\TT$ (of infinite order). Fixed points $C(\WW\PP_q^n(\m))$ form the algebra of continuous functions on the quantum weighted projective space, \cite{BrzFai:tea}. As explained in \cite{BrzFai:not}, for a fixed $N$, all the elements $\sum_i x_i$ of  $C(S^{2n+1}_q)$ that transform according to the rule 
$$
\sum_ix_i\mapsto \sum_i\xi^{r_iN} x_i,  \qquad r_i\in \ZZ,
$$
form a subalgebra of $C(S^{2n+1}_q)$ isomorphic to $C(L^{2n+1}_q(N; \m))$. The action $\varrho_{\m}$  gives rise to the $\TT$-action $\hat\varrho_{\m}$ on $C(L^{2n+1}_q(N; \m))$ with fixed points being again $C(\WW\PP_q^n(\m))$: an element $x \in C(L^{2n+1}_q(N; \m))$ transforms under $\hat\varrho_{\m}$ as $x\mapsto \xi^{r} x$ provided it transforms as $x\mapsto \xi^{rN} x$ under $\varrho_{\m}$.  The actions $\varrho^{N}_\m$ and  $\hat\varrho_{\m}$ can be uderstood as being derived from $\varrho_\m$ via the short exact sequence of abelian groups
$$
\xymatrix{ 1 \ar[r] & \TT \ar[r] &\TT \ar[r] & \ZZ_N \ar[r] & 1,
}
$$
where the  (non-trivial) monomorphism is  $\xi \mapsto \xi^N$ and the (non-trivial) epimorphism is $\xi\mapsto\zeta$; see e.g.\ \cite[Section~A.1.1]{NasVan:gra}.

We describe $C^*$-algebras of the quantum weighted projective spaces $\WW\PP_q^1(\m)$, 
$\m=(m_0,m_1)$, as AF
graph algebras. Let  $g:=\gcd(m_0,m_1)$, $\tilde{m}_0:=m_0/g$ and $\tilde{m_1}:=m_1/g$, and define a graph 
$W_1(\m)$ as follows. The graph has $\tilde{m}_1 +1$ vertices, denoted $w_0,\ldots,w_{\tilde{m}_1}$. 
For each $j\in\{1,\ldots,w_{\tilde{m}_1}\}$ there are infinitely many edges from $w_0$ to $w_j$, denoted 
$f_{jk}$, $k\in\NN$, i.e.
\begin{equation}
\begin{tikzpicture}[scale=1.8]

\node (0_0) at (0,0) [circle] {};
\node (1_0) at (1,0) [circle] {};
\node (2_0) at (2,0) [circle] {...};
\node (3_0) at (3,0) [circle] {};
\node (0_1) at (0,1) [circle] {};
\node (1_1) at (1,1) [circle] {};
\node (3_1) at (3,1) [circle] {};

\foreach \y in {0,1} \filldraw  (0_\y) circle (1pt);
\foreach \y in {0}  \filldraw  (1_\y) circle (1pt);
\foreach \y in {0}  \filldraw (3_\y) circle (1pt);

\draw (0.2,1.1) node {$w_0$};
\draw (-0.2,0) node {$w_1$};
\draw (0.8,0) node {$w_2$};
\draw (3.3,-.05) node {$w_{\tilde{m}_1}$};

\draw[-stealth,thick] (0_1)-- (0_0) node[pos=0.5, inner sep=0.5pt, anchor=west] {$(\infty)$};
\draw[-stealth,thick] (0_1)-- (1_0) node[pos=0.5, inner sep=0.5pt, anchor=west] {$\;(\infty)$};
\draw[-stealth,thick] (0_1)-- (3_0) node[pos=0.6, inner sep=0.5pt, anchor=south] {$\;\;\;(\infty)$};

\end{tikzpicture}
\end{equation}

\begin{proposition}\label{3dimweighted}
For all values of $\m = (m_0,m_1)$, $C(\WW\PP_q^1(\m))$ is  an AF-algebra isomorphic to the graph $C^*$-algebra $C^*(W_1(\m))$. Consequently,
$$
K_0(C(\WW\PP_q^1(\m))) = \ZZ^{1+m_1/\gcd(m_0,m_1)}, \qquad K_1(C(\WW\PP_q^1(\m))) = 0.
$$
\end{proposition}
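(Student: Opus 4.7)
The plan is to realize $C(\WW\PP_q^1(\m))$ as the fixed-point algebra of a circle action on a quantum lens space, which by Theorem~\ref{thm.lens.main} is a graph $C^*$-algebra, and then to identify this fixed-point algebra as the AF graph $C^*$-algebra $C^*(W_1(\m))$. As a first reduction, I note that the defining condition $m_0(a_0 - b_0) + m_1(a_1 - b_1) = 0$ that cuts $C(\WW\PP_q^1(\m))$ out of $C(S^3_q)$ is equivalent, after dividing by $g = \gcd(m_0, m_1)$, to the analogous condition with weights $(\tilde m_0, \tilde m_1) = (m_0/g, m_1/g)$. Hence $C(\WW\PP_q^1(m_0, m_1)) = C(\WW\PP_q^1(\tilde m_0, \tilde m_1))$ literally as subalgebras of $C(S^3_q)$, which already yields the stated weight-invariance. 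Without loss of generality I may therefore assume $\gcd(m_0, m_1) = 1$, so that $\tilde m_1 = m_1$.

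Next, I would take $N = m_0 m_1$, so that both weights divide $N$, and apply Theorem~\ref{thm.lens.main} to identify $C(L^3_q(N; \m))$ with $C^*(L_3^{N; \m})$. Consequently $C(\WW\PP_q^1(\m))$ is the fixed-point algebra of the induced circle action $\hat\varrho_\m$ on $C^*(L_3^{N; \m})$, and this action is a weighted gauge action: each generating partial isometry $S_{e_{ij; a}^{rs}}$ is multiplied by a scalar $\xi^{\mu_e}$, where $\mu_e$ is a positive integer obtained from the total $\varrho_\m$-weight of the corresponding admissible path in $L_3 \times_c \ZZ_N$ divided by $N$. Passing to the skew-product graph $L_3^{N; \m} \times_\mu \ZZ$, the fixed-point algebra then arises as an appropriate full corner of $C^*(L_3^{N; \m} \times_\mu \ZZ)$ in the sense of \cite{Cri:cor}; since $\mu$ takes only positive values, this skew-product graph contains no cycles, so the corner is AF. The main obstacle is the combinatorial step of showing that this AF algebra is isomorphic to $C^*(W_1(\m))$. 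I would tackle this by computing the Bratteli diagram of the resulting AF core directly, tracking how the self-loops of $L_3^{N;\m}$ unfold under the $\ZZ$-action: the level-$0$ vertices, whose self-loops carry nontrivial $\hat\varrho_\m$-weight $m_0$, collapse together into a single infinite emitter (the vertex $w_0$), the level-$1$ vertices give rise to exactly $\tilde m_1 = m_1$ sinks $w_1, \ldots, w_{\tilde m_1}$, and the admissible-path multiplicities between the two levels stack up in the direct limit to produce infinite families of parallel edges from $w_0$ to each $w_j$, matching $W_1(\m)$.

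The $K$-theory computation then follows immediately from the graph description of $W_1(\m)$. Every vertex of $W_1(\m)$ is either a sink (the $w_j$ for $j \geq 1$) or an infinite emitter ($w_0$), so $W_1(\m)$ has no regular vertices and the endomorphism $\Phi$ in the $K$-theory formula of \cite[Theorem~3.2]{RaeSzy:Cun} has empty domain. Therefore $K_0(C^*(W_1(\m))) = \ZZ^{\tilde m_1 + 1}$ and $K_1(C^*(W_1(\m))) = 0$, as claimed.
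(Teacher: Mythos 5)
Your route is genuinely different from the paper's (which never passes through lens spaces: it constructs an explicit $*$-homomorphism $\phi\colon C^*(W_1(\m))\to C^*(L_3)^{\varrho_{\m}}$ by exhibiting the images of all generators in terms of $S_{e_{00}},S_{e_{01}},S_{e_{11}}$, gets injectivity from the Cuntz--Krieger uniqueness theorem \cite{Szy:ck}, and surjectivity from the conditional expectation plus a case analysis of invariant elements $S_\alpha S_\beta^*$). Your reduction to coprime weights and your final $K$-theory computation for $C^*(W_1(\m))$ via \cite{RaeSzy:Cun} are fine. However, there are genuine gaps in the middle. First, your description of the induced circle action on $C^*(L_3^{N;\m})$ is incorrect as stated: under the identification of Theorem~\ref{thm.lens.main}, the generator attached to an admissible path from $(v_i,r)$ to $(v_j,s)$ is rescaled by $\xi^{w}$, where $w$ is the total $\varrho_{\m}$-weight of that path, and $w\equiv s-r \pmod N$. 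For $r\neq s$ this is not divisible by $N$ (e.g.\ $m_0=2$, $m_1=3$, $N=6$: the admissible path $(e_{00},2)(e_{01},4)(e_{11},1)$ from $(v_0,0)$ to $(v_1,1)$ has weight $7$), so ``total weight divided by $N$'' is not an integer and the action you write down is not $\hat\varrho_{\m}$. This is repairable --- one may use the transported full circle action with weights $w_e$ themselves, whose fixed points are still $C(\WW\PP_q^1(\m))$ --- but that requires proving that the isomorphism of Theorem~\ref{thm.lens.main} is equivariant (i.e.\ that the generators correspond to $\varrho_{\m}$-homogeneous elements), which you assert rather than verify. Also, \cite{Cri:cor} concerns finite group actions; for the $\TT$-action you need the skew-product-by-$\ZZ$ (crossed product by $\TT$) machinery instead.

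More seriously, the decisive step --- identifying the resulting AF corner with $C^*(W_1(\m))$ --- is missing. AF-ness alone does not give the proposition: two AF algebras are isomorphic only if their Bratteli diagrams agree after telescoping, equivalently if their scaled, ordered $K_0$-groups coincide, and your sketch (``level-$0$ vertices collapse into a single infinite emitter'', ``multiplicities stack up'') asserts the answer without computing the diagram of the core, without matching dimension groups with order and scale, and without explaining why the $\gcd(N,m_0)=m_0$ level-$0$ vertices of $L_3^{N;\m}$ contribute a single vertex $w_0$. This is exactly the work the paper does by explicit formulas for $\phi(P_{w_j})$ and $\phi(S_{f_{jk}})$ together with the injectivity and surjectivity arguments. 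Until you either produce such an explicit isomorphism or carry out an honest Bratteli-diagram/dimension-group computation, the central claim $C(\WW\PP_q^1(\m))\cong C^*(W_1(\m))$ --- and hence the stated $K$-theory of the quantum weighted projective line --- remains unproved.
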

\begin{proof}
As explained above, $C(\WW\PP_q^1(\m))$ is isomorphic to the $C^*$-algebra of fixed 
points for the generalized gauge action of the circle group $\TT$ on the graph algebra $C^*(L_3)$, such that 
$$ \varrho_{\m}(S_{e_{ij}}) = \xi^{\m_i}S_{e_{ij}} \;\;\; \text{for} \; i=0,1, \; j=i,1. $$ 
We denote this fixed point algebra $C^*(L_3)^{\varrho_{\m}}$ 
and  we construct a $C^*$-algebra isomorphism $\phi:C^*(W_1(\m)) \to C^*(L_3)^{\varrho_{\m}}$. 
At first we find targets for the generators of  $C^*(W_1(\m))$ inside $C^*(L_3)$. Let 
$$ \begin{aligned}
\phi(P_{w_0}) & := P_{v_0} - \sum_{j=2}^{\tilde{m}_1} S_{e_{00}}^{j-2} S_{e_{01}}S_{e_{01}}^*
   (S_{e_{00}}^*)^{j-2}, \\
\phi(P_{w_1}) & := P_{v_1}, \\
\phi(P_{w_j}) & := S_{e_{00}}^{j-2} S_{e_{01}}S_{e_{01}}^*(S_{e_{00}}^*)^{j-2}, \;\;\; \text{for} \; 
   j=2,\dots,\tilde{m}_1, \\ 
\phi(S_{f_{1k}}) & := S_{e_{00}}^{\tilde{m}_1(k+1)-1} S_{e_{01}} (S_{e_{11}}^*)^{\tilde{m}_0(k+1)}, 
  \;\;\; \text{for} \; k\in\NN, \\
\phi(S_{f_{jk}}) & := S_{e_{00}}^{\tilde{m}_1(k+1)+j-2} S_{e_{01}} (S_{e_{11}}^*)^{\tilde{m}_0(k+1)}
   S_{e_{01}}^* (S_{e_{00}}^*)^{j-2},  \;\;\; \text{for} \; k\in\NN, \; j=2,\dots,\tilde{m}_1.  
\end{aligned} $$
Clearly, these elements of $C^*(L_3)$ are $\varrho_{\m}$-invariant and satisfy the defining relations for 
the graph algebra $C^*(W_1(\m))$. Thus, this assignment extends uniquely to a $*$-homomorphism 
$\phi:C^*(W_1(\m)) \to C^*(L_3)^{\varrho_{\m}}$. Injectivity of $\phi$ follows from 
\cite[Theorem 1.2]{Szy:ck}, since there are no closed paths in graph $W_3(\m)$ and $\phi(P_{w_j})\neq 0$ 
for all $j=0,\ldots,\tilde{m}_1$. 

It remains to verify that the map $\phi$ is surjective. First of all, $C^*(L_3)$ is a closed span of elements 
of the form $S_\alpha S_\beta^*$, where $\alpha$ and $\beta$ are two paths with the common range. 
The action $\varrho_{\m}$ rescales each such an element by a suitable power of $\xi$. Applying the 
conditional expectation from $C^*(L_3)$ onto $C^*(L_3)^{\varrho_{\m}}$ (integration over the orbits), 
we see that the fixed point algebra 
$C^*(L_3)^{\varrho_{\m}}$ is spanned by those elements $S_\alpha S_\beta^*$ which are fixed by 
$\varrho_{\m}$.  Hence it suffices to show that all such elements are in the range of $\phi$. 

If both $\alpha$ and $\beta$ end at $v_0$, then we must have $\alpha=\beta$ and thus $S_\alpha S_\beta^* 
=P_{v_0}=\phi(P_{w_0})+\sum_{j=2}^{\tilde{m}_1}\phi(P_{w_j})$. So suppose that $\alpha$ and $\beta$ 
end at $v_1$. If both $\alpha$ and $\beta$ contain only edges $e_{11}$, then again we must have $\alpha=
\beta$ and thus $S_\alpha S_\beta^* = P_{v_1} = \phi(P_{w_1})$. So we may assume that 
$\alpha = e_{00}^k e_{01}e_{11}^r$ for some $k,r\in\NN$. Now, if $\beta$ does not contain edge $e_{01}$, 
then $\beta=e_{11}^s$ for some $s\in\NN$ and we must have $m_0(k+1) + m_1r = m_1s$. This can 
only happen when $k=t\tilde{m}_1-1$ and $s=r+t\tilde{m}_0$ for some $t\in\NN\setminus\{0\}$. Since  $S_\alpha S_\beta^* = S_{\alpha'} S_{\beta'}^*$ with $\alpha'=e_{00}^ke_{01}$ and 
$\beta'=e_{11}^{s-r}$, we get $S_\alpha S_\beta^* = \phi(S_{f_{1(t-1)}})$ in this case. 

It remains to consider the case $\alpha = e_{00}^k e_{01}e_{11}^p$ and $\beta = e_{00}^le_{01}e_{11}^s$ 
for some $k,l,p,s\in\NN$. As above, from the start we may assume that $p=0$. We must have 
$m_0(k+1) = m_0(l+1) + m_1s$, and hence $\tilde{m}_0(k-l) = \tilde{m}_1s$. If $k=l$, then $s=0$ and 
$S_\alpha S_\beta^* = S_{e_{00}}^kS_{e_{01}}S_{e_{01}}^*(S_{e_{00}}^*)^k$. Write 
$k=r\tilde{m}_1+t$ with $r\in\NN$ and $t\in\{0,\ldots,\tilde{m}_1-1\}$. Then $S_\alpha S_\beta^*$ 
equals: (i) $\phi(P_{w_{t+2}})$ if $r=0$ and $t<\tilde{m}_1-1$, (ii) $\phi(S_{f_{(t+2)(r-1)}}
S_{f_{(t+2)(r-1)}}^*)$ if $r>0$ and $t<\tilde{m}_1-1$, (iii) $\phi(S_{f_{1r}}S_{f_{1r}}^*)$ if 
$t=\tilde{m}_1-1$. Note that this argument shows that for each path $\alpha$ in graph $L_3$ there exists 
an edge (or vertex) $f$ in graph $W_1(\m)$ such that $\phi(S_f)=P_\alpha$. 

Finally, suppose that $k\neq l$. It suffices to consider the case $k-l>0$, when also 
$s>0$. Then we must have $s=t\tilde{m}_0$ and $k-l=t\tilde{m}_1$ for some $t\in\NN\setminus\{0\}$, 
and thus $S_\alpha S_\beta = S_{e_{00}}^{l+t\tilde{m}_1}
S_{e_{01}}(S_{e_{11}}^*)^{t\tilde{m}_0}S_{e_{01}}^*(S_{e_{00}}^*)^l$. Let $f,h$ be edges (or 
possibly vertices) in graph $W_1(\m)$ such that  $\phi(f)=S_{e_{00}}^{l+t\tilde{m}_1}
S_{e_{01}}S_{e_{01}}^*(S_{e_{00}}^*)^{l+t\tilde{m}_1}$ and  $\phi(h) = S_{e_{00}}^l 
S_{e_{01}}S_{e_{11}}^{t\tilde{m}_0}(S_{e_{11}}^*)^{t\tilde{m}_0}S_{e_{01}}^*(S_{e_{00}}^*)^l$. 
Since $(l+t\tilde{m}_1)-l$ is a multiple of $\tilde{m}_1$, edges $f$ and $h$ have a common range. 
Then it is a bit tedious but not difficult to verify that $\phi(f)\phi(h)^* = S_{e_{00}}^{l+t\tilde{m}_1}
S_{e_{01}}(S_{e_{11}}^*)^{t\tilde{m}_0}S_{e_{01}}^*(S_{e_{00}}^*)^l$, and this completes the proof 
of surjectivity of $\phi$. 

As an immediate corollary of the isomorphism $C(\WW\PP_q^1(\m)) \cong C^*(W_1(\m))$, we obtain 
the following exact sequence:
$$ 
\xymatrix{  0 \ar[r] & \cK^{m_1/\gcd(m_0,m_1)} \ar[r] & C(\WW\PP_q^{1}(\m)) \ar[r] & \CC \ar[r] & 0.}
$$
It follows that $C(\WW\PP_q^1(\m))$ is an AF algebra and 
$$
K_0(C(\WW\PP_q^1(\m))) = \ZZ^{1+m_1/\gcd(m_0,m_1)}, \qquad K_1(C(\WW\PP_q^1(\m))) = 0,
$$
as required.
\end{proof}

Poposition~\ref{3dimweighted} contains full classification of algebras of continuous functions on the quantum weighted projective line: as a topological noncommutative space the quantum projective line $\WW\PP_q^1(m_0, m_1)$  is isomorphic to the quantum teardrop $\WW\PP_q^1(1, m)$, where  $m=m_1/\gcd(m_0,m_1)$.  

\begin{proposition}\label{prop.k.qwps}
Let $\m:=m_0, \ldots , m_n$ be positive integers such that there exists  $j\in\{0,1,\ldots, n -1\}$ so that $m_j$ is 
relatively prime with $m_n$. Then there exists an exact sequence
\begin{equation}\label{ses.qwps}
\xymatrix{  0 \ar[r] & \cK^{m_n} \ar[r] & C(\WW\PP_q^{n}(\m)) \ar[r] & C(\WW\PP_q^{n-1}(\m)) \ar[r] & 0.}
\end{equation}
\end{proposition}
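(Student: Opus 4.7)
The plan is to derive the exact sequence~(\ref{ses.qwps}) by applying Corollary~\ref{cor.ex.seq.lens} at the lens-space level with $N = m_n$ and then passing to fixed points under the residual circle action $\hat\varrho_\m$. With this choice, $\gcd(m_n, N) = m_n$, so Corollary~\ref{cor.ex.seq.lens} produces the exact sequence
\begin{equation*}
0 \to (\cK \otimes C(\TT))^{\oplus m_n} \to C(L_q^{2n+1}(m_n; \m)) \to C(L_q^{2n-1}(m_n; \m')) \to 0,
\end{equation*}
where $\m' := (m_0,\ldots,m_{n-1})$. The coprimeness hypothesis enters already at this stage: if $\gcd(m_j, m_n) = 1$ for some $j < n$, then $c_j = 1$ and admissible paths of the form $(e_{jj})^k e_{jn}$ reach every level-$n$ vertex $v_n^r$ of $L_{2n+1}^{m_n;\m}$ from $v_j^0$. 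Together with loops at lower levels, this produces infinitely many non-loop paths into each $v_n^r$, which is precisely what is needed to make each summand a genuine $\cK \otimes C(\TT)$ with infinite-dimensional compact factor.

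I would then observe that this sequence is equivariant for the circle action $\hat\varrho_\m$ on $C(L_q^{2n+1}(m_n;\m))$. The ideal is generated by the vertex projections $P_{v_n^r}$, which are $\hat\varrho_\m$-fixed because $\hat\varrho_\m$ transports to a generalised gauge action on $C^*(L_{2n+1}^{m_n;\m})$ that merely rescales edge partial isometries. Since $\TT$ is compact, averaging gives a faithful conditional expectation onto the fixed-point subalgebras and the fixed-point functor is exact. Its application yields a short exact sequence whose middle term is $C(\WW\PP_q^n(\m))$ by definition, and whose right-hand term is $C(\WW\PP_q^{n-1}(\m'))$: the quotient map comes from the $\varrho$-equivariant embedding $C(S_q^{2n-1}) \hookrightarrow C(S_q^{2n+1})$, so it intertwines $\hat\varrho_\m$ with $\hat\varrho_{\m'}$, whose fixed points define $C(\WW\PP_q^{n-1}(\m'))$.

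The key computation is the identification of the left-hand fixed algebra as $\cK^{m_n}$. Since each projection $P_{v_n^r}$ is fixed, the $m_n$ summands are preserved individually and it suffices to treat a single $\cK \otimes C(\TT)$. Its $C(\TT)$-factor is generated by the loop $u_r := S_{e_{nn;1}^{rr}}$, whose single-edge skew-product incarnation $(e_{nn},r)$ has $\varrho_\m$-weight $m_n$ and hence $\hat\varrho_\m$-weight $1$. The $\cK$-factor is spanned by matrix units $S_\alpha S_\beta^*$, with $\alpha,\beta$ admissible non-loop paths ending at $v_n^r$, on which $\hat\varrho_\m$ acts by the scalar $\xi^{w(\alpha)-w(\beta)}$, where $w$ denotes the $\hat\varrho_\m$-weight of a path. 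A spanning element $S_\alpha u_r^{\,m} S_\beta^*$ is therefore $\hat\varrho_\m$-fixed precisely when $m = w(\beta)-w(\alpha)$, selecting a unique fixed representative for each pair $(\alpha,\beta)$; a short check using the loop and matrix-unit relations shows that the assignment $S_\alpha u_r^{w(\beta)-w(\alpha)} S_\beta^* \mapsto e_{\alpha\beta}$ is a $\ast$-isomorphism onto $\cK$.

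The main obstacle will be the bookkeeping of weights: verifying the claimed $\hat\varrho_\m$-weights of the loop and matrix-unit generators from the explicit description of $L_{2n+1}^{m_n;\m}$ in Section~\ref{sec.lens}, and confirming that the path-by-path correspondence above genuinely assembles into a $C^*$-algebra isomorphism compatible with the ideal inclusion and the quotient map. Once these identifications are established, the exactness of the fixed-point functor yields~(\ref{ses.qwps}) directly, with the coprimeness assumption entering exactly at the points where infinitely many admissible paths into each $v_n^r$ are needed.
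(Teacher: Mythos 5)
Your route is genuinely different from the paper's. The paper never passes through the lens spaces at all: it works directly inside $C^*(L_{2n+1})^{\varrho_\m}$, takes for the ideal the closed span of the invariant elements $S_\alpha S_\beta^*$ with $\alpha,\beta$ ending at the level-$n$ vertex, and splits this ideal into subideals $J_k$ indexed by the residue class mod $m_n$ of the $\varrho_\m$-weight of $\alpha$; each nonzero $J_k$ is a copy of $\cK$, and the coprimeness hypothesis is used exactly once, to produce for every residue $k$ a path $e_{jj}^t e_{j(j+1)}\cdots e_{(n-1)n}$ whose weight hits $k$ mod $m_n$, i.e.\ to show $J_k\neq\{0\}$ for all $m_n$ classes. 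Your plan instead feeds Corollary~\ref{cor.ex.seq.lens} with $N=m_n$ through the exact fixed-point functor for the residual circle action $\hat\varrho_\m$, with the key computation that the fixed-point algebra of each summand $\cK\otimes C(\TT)$ under the transported gauge-type action is $\cK$. That computation (matrix units $S_\alpha u_r^{w(\beta)-w(\alpha)}S_\beta^*$) is fine, and exactness of taking fixed points of a compact group action is fine; what this route buys is a conceptually clean reduction, at the price of having to transport $\hat\varrho_\m$ through the rather intricate isomorphism of Theorem~\ref{thm.lens.main} and verify that it really is a generalised gauge action on $C^*(L_{2n+1}^{m_n;\m})$ with the level-$n$ loops of weight one -- which you defer as ``bookkeeping'' but which is in fact the substantive part of the argument, since the edge generators of $L_{2n+1}^{m_n;\m}$ only correspond to homogeneous elements after unwinding the corner identification.

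There is, however, one genuine conceptual slip: you locate the use of the coprimeness hypothesis in the existence of infinitely many admissible paths into each level-$n$ vertex. That is automatic (one can always loop at a lower level before entering level $n$) and is already asserted in Corollary~\ref{cor.ex.seq.lens} without any hypothesis; it is not where the hypothesis works. As written, your argument would prove the statement for arbitrary weights, and that statement is false: for $\m=(2,2)$ the action $\varrho_\m$ has the same fixed points as the unweighted circle action, so $C(\WW\PP_q^1(2,2))\cong C(\WW\PP_q^1(1,1))$ and the ideal is a single copy of $\cK$, not $\cK^2$ (compare Proposition~\ref{3dimweighted}, which gives $K_0=\ZZ^2$). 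In your route the hypothesis is really what guarantees that all $m_n$ level-$n$ summands genuinely occur in the lens-space picture with $N=m_n$ -- equivalently, that every residue class mod $m_n$ is realised as the weight of a path ending at level $n$, which is precisely the arithmetic point the paper's proof makes when showing $J_k\neq\{0\}$; it is also what keeps you away from the degenerate weight configurations where the identification of $C(L^{2n+1}_q(m_n;\m))$ with $C^*(L^{m_n;\m}_{2n+1})$, and hence Corollary~\ref{cor.ex.seq.lens}, cannot be applied blindly. You should therefore make this dependence explicit rather than attributing it to the infinitude of paths, and you should also note that the map relating the two spheres is the surjection $C(S_q^{2n+1})\to C(S_q^{2n-1})$ killing $z_n$, not an embedding.
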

\begin{proof}
We use the identification $ C(\WW\PP_q^{n}(\m))\cong C^*(L_{2n+1})^{\varrho_{\m}}$. Let $J$ be the 
closed span of all $S_\alpha S_\beta^*\in C^*(L_{2n+1})^{\varrho_{\m}}$ such that $\alpha,\beta$ are 
paths in $L_{2n+1}$ with both $\alpha$ and $\beta$ ending at vertex $m_n$. Then $J$ is a closed, 
two-sided ideal of $C^*(L_{2n+1})^{\varrho_{\m}}$ such that the quotient $C^*(L_{2n+1})^{\varrho_{\m}}
/J$ is isomorphic to $C^*(L_{2n-1})^{\varrho_{\m}}\cong C(\WW\PP_q^{n-1}(\m))$. Thus it suffices to show 
that $J\cong\cK^{m_n}$. 

For each $k\in\{0,1,\ldots,m_n -1\}$ let $J_k$ be the closed, two-sided ideal of $J$ generated by all 
projections $S_\alpha S_\alpha^*$ such that $\alpha$ is a path in $L_{2n+1}$ ending at $v_k$ and 
$\varrho_{\m}(S_\alpha)=\xi^l S_\alpha$ with $l\equiv k\, (\hspace{-3mm}\mod m_n)$. We claim that 
$J_k\cong\cK$ and $J_k J_r=\{0\}$ for $k\neq r$. Indeed, let $S_\alpha S_\alpha^*$ and $S_\beta S_\beta^*$ 
be two projections in $J_k$, as above. Then for a suitable integer $t$ the element $S_\alpha S_{\m_n}^t 
S_\beta^*$ is a partial isometry in $J_k$ with domain $S_\beta S_\beta^*$ and range $S_\alpha S_\alpha^*$. 
On the other hand, if $S_\alpha S_\alpha^*\in J_k$ and $S_\beta S_\beta^*\in J_r$ with $k$ not congruent 
to $r$ modulo $m_n$, then these two projections are not equivalent in $J$, since there is no $t\in\ZZ$ for 
which   $S_\alpha S_{k}^t S_\beta^*$ is in the fixed point algebra $C^*(L_{2n+1})^{\varrho_{\m}}$. 
It remains to show that $J_k\neq\{0\}$ for each $k$. Let $j<m_n$ be such that $m_j$ and $m_n$ are 
relatively prime. Consider path $\alpha=e_{jj}^t e_{j(j+1)}e_{(j+1)(j+2)}\ldots e_{(m_n -1)m_n}$.  
Since $m_j$ and $m_n$ are relatively prime, for each $k$ we can find a positive integer $t$ such that 
$tm_j + \sum_{i=j}^{m_n -1} m_i $ is congruent to $k$ modulo $m_n$. 
\end{proof}

\begin{corollary}
Let $\m:=m_0, \ldots , m_n$ be a sequence of positive integers such that for each $j\geq 1$ there is an 
$i<j$ so that $m_i$ and $m_j$ are relatively prime. Then 
$$
K_0(C(\WW\PP_q^n(\m))) = \ZZ^{1+\sum_{i=1}^n m_i }, \qquad K_1(C(\WW\PP_q^n(\m))) = 0.
$$
\end{corollary}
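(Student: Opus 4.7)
The plan is to proceed by induction on $n$, using Proposition~\ref{3dimweighted} as the base case and Proposition~\ref{prop.k.qwps} together with the six-term exact sequence in $K$-theory as the inductive step.

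For the base case $n=1$, the hypothesis specialises to $\gcd(m_0,m_1)=1$, so Proposition~\ref{3dimweighted} gives $K_0(C(\WW\PP_q^1(\m))) = \ZZ^{1+m_1/\gcd(m_0,m_1)} = \ZZ^{1+m_1}$ and $K_1(C(\WW\PP_q^1(\m)))=0$, which matches the claimed formula for $n=1$. For the inductive step, first observe that the hypothesis on $\m = (m_0,\ldots,m_n)$ restricts to the same hypothesis on the truncated sequence $(m_0,\ldots,m_{n-1})$, so by the induction hypothesis one has $K_0(C(\WW\PP_q^{n-1}(\m))) = \ZZ^{1+\sum_{i=1}^{n-1} m_i}$ and $K_1(C(\WW\PP_q^{n-1}(\m))) = 0$. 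Moreover, the existence of some $m_j$, $j<n$, coprime with $m_n$ is precisely the assumption needed to invoke Proposition~\ref{prop.k.qwps}, which yields the short exact sequence \eqref{ses.qwps}.

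Applying the six-term exact sequence in $K$-theory to \eqref{ses.qwps}, together with $K_0(\cK^{m_n}) = \ZZ^{m_n}$, $K_1(\cK^{m_n})=0$, and the inductive hypothesis, gives
$$
\xymatrix{0 \ar[r] & \ZZ^{m_n} \ar[r] & K_0(C(\WW\PP_q^n(\m))) \ar[r] & \ZZ^{1+\sum_{i=1}^{n-1} m_i} \ar[r] & 0}
$$
and $K_1(C(\WW\PP_q^n(\m)))=0$ immediately. Since the quotient $\ZZ^{1+\sum_{i=1}^{n-1} m_i}$ is a free abelian group, the short exact sequence splits, yielding
$$
K_0(C(\WW\PP_q^n(\m))) \;\cong\; \ZZ^{m_n} \oplus \ZZ^{1+\sum_{i=1}^{n-1} m_i} \;=\; \ZZ^{1+\sum_{i=1}^{n} m_i},
$$
which completes the induction.

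There is essentially no obstacle here; the only point requiring a word of care is that the inductive hypothesis for $(m_0,\ldots,m_{n-1})$ is indeed available, which is immediate from the form of the hypothesis since the condition ``for each $j\geq 1$ there exists $i<j$ with $\gcd(m_i,m_j)=1$'' is stable under dropping the last entry. The splitting of the short exact sequence is automatic from freeness of the quotient, so no extension problem arises.
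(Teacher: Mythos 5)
Your proposal is correct and follows essentially the same route as the paper: induction on $n$ with Proposition~\ref{3dimweighted} as the base case, the six-term exact sequence applied to \eqref{ses.qwps} from Proposition~\ref{prop.k.qwps} for the inductive step, and the splitting from freeness of the quotient. The extra remarks (that the coprimality hypothesis passes to the truncated weight sequence, and that $\gcd(m_0,m_1)=1$ in the base case) are sound and merely make explicit what the paper leaves implicit.
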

\begin{proof}
We proceed by induction on $n$. Case $n=1$ being contained in Proposition~\ref{3dimweighted}. 
Applying the $K$-functor to \eqref{ses.qwps} we obtain the six-term exact sequence
$$
 \xymatrix{ K_0(\cK^{m_n}) \ar[r] &  K_0(C(\WW\PP_q^{n}(\m))) \ar[r] & K_0(C(\WW\PP_q^{n-1}(\m)))
\ar[d] \\
K_1(C(\WW\PP_q^{n-1}(\m)))  \ar[u] & \ar[l] K_1(C(\WW\PP_q^{n}(\m))) & \ar[l] K_1(\cK^{m_n}) \, .}
 $$
 Since outer terms in the bottom row vanish (the left one by inductive assumption) also the middle term is 0, as required. Thus again using the inductive assumption and the K-theory of compact operators we obtain a short exact sequence
 $$
\xymatrix{  0 \ar[r] & \ZZ^{m_n} \ar[r] & K_0(C(\WW\PP_q^{n}(\m))) \ar[r] & 
\ZZ^{1+\sum_{i=1}^{n-1} m_i }  \ar[r] & 0,}
$$
which splits as a sequence of abelian groups thus confirming the stated form of $K$-groups of quantum weighted projective spaces.
\end{proof}

 \section*{Acknowledgments}
The first named author would like to extend his warmest thanks to the members of IMADA, University of Southern Denmark, Odense, where the work on this paper was partly carried out in May and December 2015. The research of the second named author was partially supported by  the FNU Project Grant 
`Operator algebras, dynamical systems and quantum information theory' (2013--2015),
 the Villum Fonden Research Grant `Local and global structures of groups and their algebras' (2014--2018), 
and by the Mittag-Leffler Institute during his stay there in January-February, 2016.

\end{document}